\newcommand{\cmark}{\ding{51}}%
\newcommand{\xmark}{\ding{55}}%
\newcommand{\N}{\mathbb{N}}
\newcommand{\R}{\mathbb{R}}
\newcommand{\bc}{{\boldsymbol{c}}}
\newcommand{\bd}{{\boldsymbol{d}}}
\newcommand{\bw}{{\boldsymbol{w}}}
\theoremstyle{plain}
\newtheorem{thm}{Theorem}[section]
\theoremstyle{plain}
\newtheorem{lem}[thm]{Lemma}
\newtheorem{prop}[thm]{Proposition}
\theoremstyle{definition}
\newtheorem{defi}[thm]{Definition}
\newtheorem{rem}[thm]{Remark}
\newtheorem{assumption}[thm]{Assumption}
\newtheorem{ex}[thm]{Example}
\newcommand{\ga}{\alpha}
\newcommand{\gb}{\beta}
\newcommand{\gd}{\delta}
\newcommand{\eps}{\ensuremath{\varepsilon}}
\renewcommand{\gg}{\gamma}
\newcommand{\gk}{\kappa}
\newcommand{\gl}{\lambda}
\newcommand{\gs}{\sigma}
\newcommand{\gL}{\Lambda}
\newcommand{\cA}{\mathcal{A}}
\newcommand{\cB}{\mathcal{B}}
\newcommand{\cC}{\mathcal{C}}
\newcommand{\cD}{\mathcal{D}}
\newcommand{\cE}{\mathcal{E}}
\newcommand{\cG}{\mathcal{G}}
\newcommand{\cH}{\mathcal{H}}
\newcommand{\cI}{\mathcal{I}}
\newcommand{\cK}{\mathcal{K}}
\newcommand{\cL}{\mathcal{L}}
\newcommand{\cN}{\mathcal{N}}
\newcommand{\cO}{\mathcal{O}}
\newcommand{\cP}{\mathcal{P}}
\newcommand{\cR}{\mathcal{R}}
\newcommand{\cS}{\mathcal{S}}
\newcommand{\cX}{\mathcal{X}}
\newcommand{\cY}{\mathcal{Y}}
\newcommand{\cZ}{\mathcal{Z}}
\newcommand{\bC}{\mathbb{C}}
\newcommand{\bN}{\mathbb{N}}
\newcommand{\bR}{\mathbb{R}}
\newcommand{\bT}{\mathbb{T}}
\newcommand{\bZ}{\mathbb{Z}}
\newcommand{\rC}{{\rm C}}
\newcommand{\mfA}{\mathfrak{A}}
\newcommand{\mfD}{\mathfrak{D}}
\newcommand{\mfI}{\mathfrak{I}}
\newcommand{\mfg}{\mathfrak{g}}
\newcommand{\mfu}{\mathfrak{u}}
\newcommand*{\lrscript}[5]{{\vphantom{#1}}_{#2}^{#3}{#1}_{#4}^{#5}}
\newcommand{\dualpair}[4]{\ensuremath{\lrscript{\langle}{#1}{}{}{} #3 ,#4 \rangle_{#2}}}
\newcommand{\be}{\begin {equation}}
\newcommand{\ee}{\end  {equation}}
\newcommand{\bee}{\begin {equation*}}
\newcommand{\eee}{\end {equation*}}
\newcommand{\ol}{\overline}
\newcommand{\floor}[1]{\left\lfloor #1 \right\rfloor}
\newcommand{\ceil}[1]{\left\lceil #1 \right\rceil}
\newcommand{\abs}[1]{\left\vert#1\right\vert}
\newcommand{\indi}{\mathds{1}}
\DeclareMathOperator*{\esssup}{\text{ess\,sup}}
\newcommand{\KL}{{Karhunen-Lo\`{e}ve }}
\newcommand{\Lip}{{\rm Lip}}
\newcommand{\inp}[2]{\langle #1, #2 \rangle}
\newcommand{\inpc}[2]{\left\langle #1, #2\right\rangle}
\newcommand{\norm}[1]{\left\|#1\right\|}
\newcommand{\set}[1]{\left\{#1\right\}}
\let\inf\relax \DeclareMathOperator*\inf{\vphantom{p}inf}
\begin{document}
	
	\title{Deep Operator Network Approximation Rates 
                            \\ for Lipschitz Operators}

	\date{\today}
	
	\author{
		Christoph Schwab\thanks{ChS acknowledges a visit to the 
                Isaac Newton Institute for Mathematical Sciences, Cambridge, UK,
                for support and hospitality during 16-28 April, 2023 in the 
                research semester ``Data Driven Engineering (DDE)'',
                supported by EPSRC Grant Number EP/R014604/1.
                Stimulating discussions on operator learning during workshop DDEW3 are warmly acknowledged.}
       	\footnote{ETH Zürich, Seminar for Applied Mathematics, schwab@math.ethz.ch}
		\and
		Andreas Stein 
		\footnote{ETH Zürich, Seminar for Applied Mathematics, andreas.stein@sam.math.ethz.ch, 
                Partially supported by the ETH FoDS (Foundations of Data Science) Initiative}
		\and 
		Jakob Zech \footnote{Heidelberg University, Interdisciplinary Center for Scientific Computing (IWR), jakob.zech@uni-heidelberg.de}
	}
	
	\maketitle
	
\begin{abstract}
	We establish universality and
        expression rate bounds for a class of neural Deep Operator Networks (DON)
	emulating Lipschitz (or H\"older) continuous maps $\cG:\cX\to\cY$
	between (subsets of) separable Hilbert spaces $\cX$, $\cY$.
	The DON architecture considered uses linear encoders $\cE$ and decoders $\cD$ 
	via (biorthogonal) Riesz bases of $\cX$, $\cY$, and an approximator network
	of an infinite-dimensional, parametric coordinate map 
	that is Lipschitz continuous on the sequence space $\ell^2(\bN)$. 
	Unlike previous works \cite{herrmann2022neural,MS21_984}
	which required for example $\cG$ to be holomorphic,
	the present expression rate results
        require mere Lipschitz (or H\"older) continuity of $\cG$.
	Key in the proof of the present expression rate bounds  
        is the use of either super-expressive activations (e.g.\
	\cite{yarotsky2021elementary,shen2021neural} and the references there)
	which are inspired by the Kolmogorov superposition theorem
	(\cite{KolmogAN} or \cite[Chap.11]{LorentzGG} for a comprehensive exposition), 
        or of nonstandard NN architectures with
	standard (ReLU) activations as recently proposed in \cite{shen2022neural}.
	We illustrate the abstract results by approximation rate bounds for 
        emulation of 
        a) solution operators for parametric elliptic variational inequalities, 
        and 
        b) Lipschitz maps of Hilbert-Schmidt operators.
      \end{abstract}

      {\bf keywords:} Neural Networks, Operator Learning,
      Curse of Dimensionality, Lipschitz Continuous Operators
      
      {\bf subject classifiction:} 41A65, 68T15, 68Q32
      
%
\section{Introduction}
\label{sec:intro}
Following widespread use in data and image classification and
forecasting, recent years have seen 
development of 
\emph{Deep Neural Networks} (DNNs for short) 
in scientific computing as universal, 
and highly versatile approximation architectures, that
challenge established numerical approximations such as Finite Element
and Finite Difference discretizations of partial differential
equations (PDEs) and integral equations.  
Algorithmic approaches are
based on inserting DNNs into suitable (variational, weak, least
squares, strong etc.)  forms of the PDE under consideration.  Besides
the universality of DNNs which was mathematically established early on
(see, e.g. \cite{PinkusActa} and references there), in recent years a
much more detailed picture on the expressive power of DNNs has
emerged.  In particular, feed-forward DNNs with suitable architectures
and activation functions can emulate practically all standard spline-
and Finite Element approximation spaces commonly used in the numerical
analysis of PDEs (see, e.g., \cite{LODS22_2959}) and, in particular,
also high-order FE spaces \cite{OPS20_2738} with corresponding
spectral or exponential approximation rates \cite{OSZ22_2771}. 
We refer to \cite{PiNNCFDRev} and the references there for some of the
algorithmic developments.  A particular feature of numerical
approximations of PDE solutions based on DNNs as approximation
architectures that was observed in practice was the apparent
insensitivity of the DNN approximation quality to the so-called 
``curse of dimensionality'' (CoD for short). 
This is particularly relevant for approximating maps
\begin{equation}\label{eq:cG}
  \cG:\cX\to\cY
\end{equation}
between (in general, infinite-dimensional) separable Hilbert spaces\footnote{ 
More generally, separably-valued maps $\cG$ 
into an otherwise nonseparable target space $\cY$ may be considered.
In \cite[{Section~9, App.~B}]{NeurOp} additional conditions on separable 
Banach spaces $\cX$ and $\cY$ necessary to extend the present arguments
to this more general setting are discussed.
}
$\cX$ and $\cY$. 
Operators $\cG$ as in \eqref{eq:cG} emerge for example as
parameter-to-solution mappings for parametric PDEs within the field of
Uncertainty Quantification 
(see, e.g., \cite{molinaro2023neural} and the references there),
or in so-called digital twins of complex, physical systems governed
by partial differential equations (PDEs) 
(see \cite{kapteyn2021probabilistic} and the references there).
Owing to the infinite dimension of $\cX$ and $\cY$ in \eqref{eq:cG},
    efficient numerical approximations of maps $\cG$ are to overcome the CoD.

Several (intrinsically different) mechanisms for overcoming the 
CoD in DNN emulations have been identified and mathematically justified recently. 
This includes the seminal work of A.\ \emph{Barron}
\cite{barron1993universal}, \emph{Monte-Carlo path simulation} type arguments
(e.g.\ \cite{GS21_2893,HJKTvW} and the references there), and the
emulation of sparse (generalized) \emph{polynomial chaos expansions} 
(e.g.\ \cite{BCDS17_2452,DNSZ22_2957}) by DNNs (e.g.\ \cite{ChSJZ,MR4409717,ChSJZ2}).

Specifically, in \cite{ChSJZ,MR4409717,ChSJZ2}, 
a parametric representation of inputs $x\in\cX$ of $\cG$ 
was used to prove DNN emulation rates for approximating $\cG$.  
The construction used DNNs whose depth
scales polylogarithmic in the parameter dimension, and polynomially in
the DNN expression accuracy (i.e., emulation fidelity).  Key in the
proofs of these results is the \emph{holomorphic} dependence of
$\cG(x)$ on the input $x$. 
The related DNN emulation results
were obtained with sparsely connected, deep feedforward NNs with ReLU
or smooth (e.g.\ sigmoidal or $\tanh(\cdot)$) activation.
DNN emulation rate results that are free from the CoD for 
\emph{low regularity maps} $\cG$ between function spaces were obtained
e.g. using the so-called Feynman-Kac representation of
solutions of Kolmogorov PDEs in (jump-)diffusion models.
These results used ReLU
DNNs of moderate depth \cite{GS21_2893,HJKTvW},
but the error bounds hold in a mean-square sense or only with high probability.

While quantified, 
parametric holomorphy of solution families of
parametric PDEs has been verified in many settings 
(particularly in elliptic and parabolic PDEs, e.g.\
\cite{HS13_409,ZDS19_2621,JSZ17_2339,CSZ18_2319,HS21_2779}),
there are broad classes of applications where relevant maps are
H\"older or Lipschitz, but not holomorphic. 
One purpose of the present paper is to obtain mean-square
DNN expression rate bounds for \emph{Operator Network} (ONet)
emulations with architecture \eqref{eq:DONarch} below, 
of Lipschitz (and, more generally, H\"older smooth) maps $\cG$ 
between separable Hilbert spaces.
\subsection{Previous work for operator networks}
\label{sec:Dons}
A rather recent line of research uses so-called 
\emph{Operator Networks} to emulate the possibly nonlinear
input-output map $\cG$, 
such as for example the coefficient-to-solution map in linear,
elliptic divergence form PDEs of second order.
A variety of DNN architectures has been put forward recently with the
aim of efficient operator emulation, with distinct architectures
tailored to the emulation of particular operators.  
A number of acronyms labelling these DNN classes has been coined
(``deepONets'' \cite{deepONet}, Fourier Neural Operators ``FNOs''
\cite{NeurOp,FNOGeo}, 
UNet architectures combined with FNOs ``U-FNOs'' \cite{UFNo}, 
encoders based on transformers, 
etc.). 
We refer to
\cite{NeurOp,LMK22,PiDON,PCOGDerInfNOp22,li2022transformer,cao2021choose}
and the references there.

In this paper, we discuss an architecture that belongs to the same
general category as those proposed in, for instance,
\cite{HESTHAVEN201855,deepONet,herrmann2022neural}. 
It reduces the task of
approximating $\cG$ to that of emulating (components of)
\emph{countably-parametric maps $G: \ell^2(\bN)\to\ell^2(\bN)$} 
with DNNs: using an appropriate 
\emph{encoder} $\cE_\cX:\cX\to\ell^2(\N)$ and
\emph{decoder} $\cD_{\cY}:\ell^2(\N)\to\cY$, 
the map $\cG$ in \eqref{eq:cG} allows the structural representation
%
\begin{equation}\label{eq:Gstrct}
  \cG = \cD_\cY \circ G \circ \cE_{\cX}.
\end{equation}
For the deepONet ``branch-trunk'' architecture \cite{ChenChen1993,deepONet}, 
expression rate bounds were first investigated in \cite{LMK22} for
holomorphic maps $\cG$, with inputs $x\in\cX$ stemming from a
Karhunen-Lo\'eve expansion with exponentially decaying
eigenvalues. 
In \cite{herrmann2022neural}, we considered the approximation of
holomorphic maps using encoders and decoders based on frame
representations of the input and output in $\cX$ and $\cY$. We
showed that dimension-independent algebraic expression rates can be
obtained depending on the smoothness of the spaces $\cX$, $\cY$, for
example in terms of Sobolev or Besov regularity.  In this situation,
the frame coefficients typically exhibit only polynomial decay
rather than exponential decay.  
For Lipschitz continuous
$\mathcal{G}$, the recent preprint \cite{lanthaler2023operator} offers lower
bounds, indicating that the PCA-net using PCA-based encoders and decoders with 
standard, deep feedforward neural network based approximators 
generally can \emph{not} avoid the CoD and is unable to achieve algebraic convergence.
%
\subsection{Contributions}
\label{sec:Contr}
%
We consider (nonlinear) maps $\cG:\cX\to\cY$
which are Lipschitz or H\"older regular between
infinite-dimensional, separable Hilbert spaces. 
Endowing $\cX$ and $\cY$ with stable (Riesz) 
bases and corresponding encoder/decoder
pairs, such maps admit the structure \eqref{eq:Gstrct}. 
We prove algebraic expression rate bounds for corresponding
finite-parametric DNN surrogates,
which are a key step in the mathematical analysis of DNN operator surrogates 
as outlined e.g. in \cite[Section~2]{NeurOp}.
The considered 
linear encoders and decoders built on Riesz-bases in $\cX$ and $\cY$
accommodate a variety of currently used Operator Nets, 
comprising Fourier- and KL expansions. 
If $\cX$ and $\cY$ are function
spaces over domains $D\subset \R^d$ and $D'\subset \R^{d'}$ 
(possibly of different dimensions $d$ and $d'$; for 
simplicity of notation, we constrain here to $D=D'$ and $d=d'$),
exhibiting sufficient smoothness to allow
continuous embeddings into the space of continuous functions on $\overline{D}$,
also point-collocation as e.g. in the deepONet architecture
\cite{deepONet} is in principle admitted. 
Furthermore, we present a
universal approximation theorem for continuous (but not necessarily
H\"older continuous) operators $\cG:\cX\to\cY$, which guarantees
uniform convergence on any compact subset of $\cX$.

Our main result, Theorem~\ref{thm:MainRes}, states that there exist
\emph{finite-parametric approximator surrogates $\widetilde G$} of the
countably-parametric maps $G$ in \eqref{eq:Gstrct} which, upon
insertion into \eqref{eq:Gstrct}, result in finite-parametric neural
operators $\widetilde{\cG}$
offering (on ``higher-regularity'' inputs from subspaces of $\cX$)
algebraic consistency orders with $\cG$ that are free of the CoD.
This is achieved by leveraging recent progress in the construction of
DNN approximations for scalar functions of many variables of low
regularity.  These networks are either of standard feedforward
architecture, but leverage superior expressive power of DNNs by
invoking \emph{nonstandard activations} (termed ``super-expressive''
e.g.\ in \cite{shen2021neural,yarotsky2021elementary} inspired by the
Kolmogorov-Arnold superposition theorem, e.g.\
\cite{KolmogAN,LorentzGG} and the references there) or they are based
on \emph{nonstandard architectures} allowing a higher degree of
connectivity than DNNs of plain feedforward type (e.g.\
\cite{shen2022neural}).  As such, our results do not contradict the
recent lower bounds for ReLU-activated, feedforward DONs stated
e.g. in \cite{LMK22,lanthaler2023operator}.  We remark that
computational adaptation of finite-parametric activation functions in,
e.g., so-called ``PiNNs'', has been observed to be computationally
effective in applications in \cite{AdaptActivat}.  By resorting to
such nonstandard DNN architectures and activations in the emulation of
the (component functions of) map $G$ in \eqref{eq:Gstrct}, we identify
sufficient conditions in order for the maps $\cG$ to be emulated by
DNNs with accuracy that behaves algebraically in terms of the number
of neurons.  We finally observe that the present setting
\eqref{eq:Gstrct} is a particular case of a number of other DON
architectures (e.g. \cite{NeurOp,LMK22}).  The DONs analyzed can also
be viewed as a building block of the recently featured ``Nonlocal
Neural Operators'' (NNOs) in \cite{lanthaler2023nonlocal}.
\subsection{Notation}
\label{sec:notat}
Throughout, and unless explicitly stated otherwise, $\cX$ and $\cY$ shall denote
separable Hilbert spaces of infinite dimension.
	For any $p\in[1,\infty]$, let $\ell^p(\bN)$ denote the space of all 
        $p$-summable, real-valued sequences over $\bN$.
	The Borel $\gs$-algebra of any metric space $({\cZ}, d_{{\cZ}})$ 
        is generated by the open sets in ${\cZ}$ and denoted by $\cB({\cZ})$.
	For any $\gs$-finite and complete measure space $(E,\cE,\mu)$, 
	a Banach space $(\cZ, \left\|\cdot\right\|_{\cZ})$, 
	and 
	summability exponent $p\in[1,\infty]$,
	we define the Lebesgue-Bochner spaces 
\begin{equation*}
  L^p(E, \mu; \cZ) := \{\varphi:E\to \cZ:\;
  \text{$\varphi$ is strongly measurable and $\|\varphi\|_{L^p(E, \mu; \cZ)}<\infty$}  \},
\end{equation*}
where
\begin{equation*}
  \|\varphi\|_{L^p(E, \mu; \cZ)}
   :=
  \begin{cases}
    \left(\int_{E}\|\varphi(x)\|_{\cZ}^p\mu(dx)\right)^{1/p},\quad &p\in[1,\infty) 
    \\
    \esssup\limits_{x\in E} \|\varphi(x)\|_{\cZ},\quad &p=\infty.
  \end{cases}
\end{equation*}
In case that $\cZ=\bR$, we use the shorthand notation
$L^p(E,\mu):=L^p(E,\mu;\bR)$.  
In case that $E\subseteq\bR^d$ is a subset of
Euclidean space, we assume $\cE=\cB(E)$ and $\mu$ is the Lebesgue
measure, and write $L^p(E):=L^p(E,\mu;\bR)$, unless stated otherwise.
We further denote by $\norm{\cdot}_2$ the Euclidean norm on
$E\subseteq\bR^d$.
\subsection{Layout}
\label{sec:layout}
This paper is organized as follows: In Section \ref{sec:setting}
  we briefly present the setting of our consistency analysis, which
  consists of linear $N$-term encoder/decoder pairs in the domain and
  the range of the operator under consideration.  Section
  \ref{sec:univers} provides a universal approximation theorem valid
  for the approximation of continuous maps $\cG:\cX\to\cY$. To show
  convergence rates, in Section \ref{sec:dim_trunc} we first discuss
  preliminary results regarding the dimension truncation of the
  encoded input and output. We use these results in Section
  \ref{sec:nn_approx} to give our main result, which shows algebraic
  convergence rates for the approximation of Lipschitz-continuous
  operators. 
	We comment on the extension to e.g. 
	H\"older continuity in Section \ref{sec:Extns}. 

  To illustrate the scope of our abstract results, in Section
  \ref{sec:Examples} we prove ONet expression rate bounds for 
  particular classes of Lipschitz mappings $\cG$ covered by our setting:
  in Section~\ref{sec:VI}, we consider 
  solution maps to parametric elliptic variational inequalities as arise e.g. 
  in optimal stopping, optimal control, and in contact problems in mechanics. 
  In Section~\ref{sec:LipHS}, we obtain ONet expression rate bounds for 
  Lipschitz maps of Hilbert-Schmidt operators acting on separable Hilbert spaces.

	\section{Setting}
	\label{sec:setting}
	%
	We adopt the setting from~\cite{herrmann2022neural}.
	Let $(\cX,\inp{\cdot}{\cdot}_\cX )  $ and $(\cY, \inp{\cdot}{\cdot}_\cY)$ 
        denote two separable Hilbert spaces over $\bR$. 
        We consider neural network emulations of 
        infinite-dimensional (non-linear) operators $\cG:\cX\to \cY$. 
	To this end, we rewrite $\cG$ in the form $\cG = \cD_Y\circ G\circ \cE_{\cX}$, 
        where $\cE_\cX:\cX\to \ell^2(\bN)$ and $\cD_\cY:\ell^2(\bN)\to \cY$ 
        are linear \emph{encoder} and \emph{decoder}, respectively,
        and   
        $G:\ell^2(\bN)\to \ell^2(\bN)$ is an infinite-parametric map.
	We employ linear encoders or ``analysis operators'' that 
        convert function space inputs from $\cX$ 
        with suitable representation systems to coefficient sequences such as
        e.g.\ Fourier coefficients w.r.t. a fixed orthonormal basis of $\cX$ 
        (such as, e.g., principal component
         representations with respect to a \KL (KL) basis corresponding to
         a covariance operator of a probability measure on $\cX$, leading
         to the so-called ``PCA-ONet'', see e.g. \cite{lanthaler2023operator}),
        while our decoders perform the converse ``synthesis'' operation w.r.t. 
        to another fixed representation system in $\cY$. 
        We do not insist on orthogonal representation systems in $\cX$ or in $\cY$, 
        and rather admit Riesz bases of $\cX,\cY$ and their 
        analysis and synthesis operators as encoders and decoders.   
	We build finite-parametric deep operator network surrogates of the
        general architecture
	\begin{equation}\label{eq:DONarch}
		\widetilde\cG:=\cD_Y\circ \widetilde G\circ\cE_{\cX}, 
	\end{equation}
	Theorem~\ref{thm:Universality} implies in particular
        universality of the DON where the \emph{approximator network} 
        $\widetilde G:\ell^2(\bN)\to \ell^2(\bN)$ 
        is a neural network emulation of the infinite-parametric, Lipschitz-continuous 
        maps $G$, 
        that allow for an efficient approximation of $\cG$ 
        on suitable sets $S\subseteq\cX$.
	More precisely, we consider convergence of the mean-squared error 
	\begin{equation}
		\left(\int_S \|\cG(x) - \widetilde\cG(x)\|_\cY^2 \,\mu(dx)\right)^{1/2},
	\end{equation}
	where $\mu$ is an appropriate measure on $(S, \cB(S))$.
	\subsection{Encoders and Decoders}
	\label{sec:riesz_bases}
	%
	We admit \emph{linear en- and decoders} built from
        Riesz bases as representation systems in $\cX$ and $\cY$, 
        comprising in particular Fourier-, Wavelet- and KL-bases 
        for input and output parametrization. 
        The following notion of a Riesz basis is one of several equivalent
        definitions.
        It follows as a consequence of
        \cite[Definition 3.6.1]{christensen2003introduction} 
        and \cite[Theorem 3.6.6]{christensen2003introduction}.  
        We refer to \cite[Section 3.6]{christensen2003introduction} for details.
	\begin{defi}
          Let $(\cH, \inp{\cdot}{\cdot}_\cH)$ be a separable Hilbert space.  
          A complete sequence
          $\mathbf\Psi_\cH=(\psi_i, i\in\bN)\subset \cH$ is called
          \emph{Riesz basis of $\cH$} if there exists
          constants\footnote{The constants $\gl_{\cH}$ and
              $\gL_{\cH}$ depend on the Riesz basis
              $\mathbf{\Psi}_\cH$ but not on $\cG$. They quantify the sensitivity 
              of the approximator error $G - \tilde{G}$ on the output error.
              For conciseness, we do not
              explicitly indicate this dependence in our notation.}
          $0 < \gl_{{\cH}}\le \gL_{{\cH}}<\infty$ such that for
          all $\bc=(c_i,i\in\bN)\in\ell^2(\bN)$ there holds
          \begin{equation}\label{eq:norm_equiv}
            \gl_{{\cH}} \norm{\bc}_{\ell^2(\bN)}^2
            =
            \gl_{{\cH}} \sum_{i\in\bN} c_i^2
            \le 
            \norm{\sum_{i\in\bN} c_i\psi_i}_\cH^2
            \le 
            \gL_{{\cH}} \sum_{i\in\bN} c_i^2
            =
            \gL_{{\cH}} \norm{\bc}_{\ell^2(\bN)}^2.
          \end{equation} 
%
	\end{defi}
	
	For any Riesz basis $\mathbf\Psi_\cH=(\psi_i, i\in\bN)$, there
        exists another (unique for given
        $\mathbf\Psi_\cH=(\psi_i, i\in\bN)$) Riesz basis
        $\widetilde{\mathbf\Psi}_\cH=(\widetilde \psi_i, i\in\bN)$,
        called \emph{dual basis} or \emph{biorthogonal system} to
        $\mathbf\Psi_\cH$, such that there holds
	\begin{equation*}
          f=\sum_{i\in\bN}\inp{f}{\widetilde \psi_i}_\cH\psi_i \quad\text{for all $f\in\cH$\qquad and \qquad}
          \inp{\psi_i}{\widetilde\psi_j}_\cH=\gd_{ij}\quad\text{for all $i,j\in\bN$,}
	\end{equation*}
	see e.g.\ \cite[Theorem 3.6.2]{christensen2003introduction}.
	If $\mathbf\Psi_\cH$ is an orthonormal basis (ONB) of $\cH$,
        then $\widetilde{\mathbf\Psi}_\cH=\mathbf\Psi_\cH$ and
        $\gl_{\cH} = \gL_{\cH} = 1$.
	
	For the remainder of this article, we fix Riesz bases
        $\mathbf\Psi_\cX=(\psi_i, i\in\bN)\subset \cX$ and
        $\mathbf\Psi_\cY=(\eta_j, j\in\bN)\subset \cY$ for $\cX$ and
        $\cY$, respectively, and denote their corresponding dual bases
        by
        $\widetilde{\mathbf\Psi}_\cX=(\widetilde \psi_i,
        i\in\bN)\subset \cX$ and
        $\widetilde{\mathbf\Psi}_\cY=(\widetilde\eta_j,
        j\in\bN)\subset \cY$.
	The associated Riesz constants from~\eqref{eq:norm_equiv} are
        denoted by
$$
0 < \gl_\cX\le\gL_\cX<\infty\;, \quad \mbox{ and } \quad 0 <
\gl_\cY\le\gL_\cY < \infty \;.
$$
For given Riesz bases $\mathbf\Psi_\cX$ of $\cX$ and $\mathbf\Psi_\cY$
of $\cY$, we define the encoder/decoder pairs
\begin{equation}
\label{eq:Xencod}
  \cE_{\cX}:\cX\to \ell^2(\bN),\quad x\mapsto (\inp{x}{\widetilde \psi_i}_\cX, i\in\bN),
  \qquad \quad
  \cD_{\cX}:\ell^2(\bN)\to\cX,\quad \bc\mapsto \sum_{i\in\bN}c_i\psi_i,
\end{equation}
and
\begin{equation}
\label{eq:Yencod}
  \cE_{\cY}:\cY\to \ell^2(\bN),\quad y\mapsto (\inp{y}{\widetilde \eta_j}_\cY, j\in\bN),
  \qquad \quad 
  \cD_{\cY}:\ell^2(\bN)\to\cY,\quad \bc\mapsto \sum_{j\in\bN}c_j\eta_j.
\end{equation}
On the entire spaces, the encoders/decoders are boundedly invertible
mappings and it holds
$$         
\cD_\cH\circ\cE_\cH = I_\cH\quad\mbox{ for }\quad \cH\in\{\cX,\cY\}.
$$
We mention that explicit, ``Finite-Element-like'' \emph{constructions
  of piecewise polynomial biorthogonal systems in polytopal domains}
are available, see e.g. \cite{DKU99,CDFS13,DahmStevEBEWav1999,LODS22_2959}.
	\subsection{Smoothness Scales $\cX^s$, $\cY^t$}
	\label{sec:smoothness_scales}
	%
        Our convergence rate analysis will be obtained on (in general compact) 
        subsets $\cX^s\subset\cX$ and $\cY^t\subset \cY$ of inputs / output pairs which  
        admit extra regularity.
        We postulate that, in terms of the Riesz bases $\mathbf\Psi_\cX$ and $\mathbf\Psi_\cY$, 
        this regularity takes the form of weighted summability of the 
        corresponding sequences of expansion coefficients.
        To formalize this condition, we next define
        a scale of Hilbert spaces depending on a smoothness parameter
        characterizing this type of coefficient decay.
	Typical instances of such ``smoothness spaces'' are Sobolev and Besov spaces 
        with $p$-integrable weak derivatives (e.g. \cite{TriebelWavDom} and the references there),
	or the Cameron-Martin space of the covariance operator of a Gaussian measure on $\cX$ or $\cY$
        (see \cite{LPFrame09}).
	
	Let then $\bw=(w_i,i\in\bN)\subset (0,1]$ 
        be a non-increasing sequence of weights such that
        $\bw\in\ell^{1+\eps}(\bN)$ for all $\eps>0$. 
        The latter condition is sufficient to derive the truncation error rates 
        in Proposition~\ref{prop:input_trunc} for arbitrary small $\delta>0$. 
        With this sequence, following \cite[Section 2]{herrmann2022neural}, 
        for all $s$, $t\ge 0$ we introduce  
        Hilbert spaces $\cX^s\subset \cX$, $\cY^t\subset \cY$ via their norms
	\begin{equation}
		\norm{x}_{\cX^s}^2:=\sum_{i\in\bN} \inp{x}{\widetilde \psi_i}_\cX^2w_i^{-2s},\qquad
		\norm{y}_{\cY^t}^2:=\sum_{j\in\bN} \inp{y}{\widetilde \eta_j}_\cY^2w_j^{-2t}.
	\end{equation}
	In order to streamline the presentation,
        we utilize the same sequence $\bw$ of weights $w_j$ to characterize $\cX^s$ and $\cY^t$, 
        but all of our subsequent results remain 
        valid for 
        distinct weighting sequences $\bw_\cX, \bw_\cY \in\ell^{1+\eps}(\bN)$.
	We further note that $\cX^s=\{x\in\cX: \|x\|_{\cX^s}<\infty\}$
        equipped with the scalar product 
        $\inp{x}{x'}_{\cX^s}:=\sum_{i\in\bN} \inp{x}{\widetilde
          \psi_i}_\cX \inp{x'}{\widetilde \psi_i}_\cX w_i^{-2s}$
        is a separable Hilbert space with Riesz basis
        $(w_i^s\psi_i, i\in\bN)$, see e.g.\ 
        \cite[Lemma 2.9 and Remark 2.10]{herrmann2022neural}.
        \section{Universality}
        \label{sec:univers}
        A universality result for the approximation of functionals
        mapping from compact subsets of $C([a,b])$ or $L^p([a,b])$ to
        $\R$ using neural networks was already established in the pioneering work of
        Chen and Chen in 1993 \cite{ChenChen1993}. 
        More recently, a universal
        approximation theorem for Lipschitz continuous $\cG:\cX\to\cY$
        utilizing the PCA-net architecture was proven in \cite{lanthaler2023nonlocal}. 
        Universality in the infinite width limit 
        was shown in the $L^2(\cX,\mu)$
        sense for measures $\mu$ possessing finite fourth moments,
        with the KL basis of the covariance of $\mu$, and
        relaxed later to require only 
        finite second moments for $\mu$ and $\cG$ 
        being a $\mu$-measurable map \cite[Theorem 3.1]{lanthaler2023operator}.

        We start our present approximation rate analysis by 
        proving an universal approximation theorem 
        which is valid for any continuous $\cG:\cX\to\cY$,
        with uniform convergence on compact subsets of $\cX$. 
        To state the result, we introduce
        the set of admissible activation functions as in \cite{LESHNO1993861}:
         \begin{align*}
          \cA:=\big\{
          		&\text{$\sigma\in L_{\rm loc}^\infty(\R)$ is not polynomial and the closure } \\ 
          		&\qquad\text{of the points of discontinuity has Lebesgue measure $0$}\big\}.
        \end{align*}
        ONets $\widetilde\cG$ as in \eqref{eq:DONarch} with the (components of the) approximator 
        $\widetilde G$ being a feedforward $\sigma$-NN 
        with activation $\sigma\in\cA$ are universal.

        \begin{thm}\label{thm:Universality}
          Let $\cX$, $\cY$ be two separable Hilbert spaces, let
          $\cG:\cX\to\cY$ be continuous and let $\sigma\in\cA$.

          Then there exists a sequence of operator nets
          $\widetilde \cG_n:\cX\to\cY$, $n\in\N$, with architecture
          \eqref{eq:DONarch} such that
          \begin{equation*}
          \forall x \in \cX: \quad 
            \lim_{n\to\infty}\widetilde\cG_n(x)=\cG(x) \;.
          \end{equation*}
          The convergence is uniform on every compact subset of $\cX$.
        \end{thm}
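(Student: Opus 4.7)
The strategy is to reduce the infinite-dimensional universal approximation problem to a finite-dimensional one via coordinate truncation, and then to invoke the classical Leshno--Lin--Pinkus--Schocken universal approximation theorem \cite{LESHNO1993861} componentwise. Set $G := \cE_\cY \circ \cG \circ \cD_\cX : \ell^2(\bN) \to \ell^2(\bN)$. The identities $\cD_\cH \circ \cE_\cH = I_\cH$ from Section \ref{sec:riesz_bases} give $\cG = \cD_\cY \circ G \circ \cE_\cX$, and the boundedness of $\cE_\cX, \cE_\cY, \cD_\cX$ combined with continuity of $\cG$ makes $G$ continuous.

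Let $Q_n : \ell^2(\bN) \to \ell^2(\bN)$ denote the truncation keeping only the first $n$ coordinates. Identifying the range of $Q_n$ with $\R^n$, the Leshno--Pinkus theorem applied to each of the first $n$ components of $Q_n \circ G$ yields, for every $n$, a feedforward $\sigma$-NN $f_n : \R^n \to \R^n$ such that $f_n$ approximates $(Q_n \circ G \circ Q_n)|_{\R^n}$ uniformly to within $1/n$ on the Euclidean ball of radius $n$ in $\R^n$. Embedding $f_n$ back into $\ell^2(\bN)$ by zero-padding, I define $\widetilde G_n(c) := f_n(Q_n c)$ and $\widetilde\cG_n := \cD_\cY \circ \widetilde G_n \circ \cE_\cX$, which is of the architecture \eqref{eq:DONarch}.

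For the error, using $\norm{\cD_\cY u}_\cY^2 \le \gL_\cY \norm{u}_{\ell^2(\bN)}^2$ from \eqref{eq:norm_equiv}, $\norm{\cG(x) - \widetilde\cG_n(x)}_\cY$ is controlled, up to the factor $\sqrt{\gL_\cY}$, by three terms: (i) the tail $\norm{G(\cE_\cX(x)) - Q_n G(\cE_\cX(x))}_{\ell^2(\bN)}$; (ii) the continuity defect $\norm{G(\cE_\cX(x)) - G(Q_n \cE_\cX(x))}_{\ell^2(\bN)}$; and (iii) the NN error $\norm{Q_n G(Q_n \cE_\cX(x)) - f_n(Q_n \cE_\cX(x))}_2$. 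For a single $x$, term (i) vanishes by the $\ell^2$-convergence of the expansion of $G(\cE_\cX(x))$; (ii) vanishes by continuity of $G$ combined with $Q_n \cE_\cX(x) \to \cE_\cX(x)$; and (iii) is bounded by $1/n$ as soon as $n \ge \norm{\cE_\cX(x)}_{\ell^2(\bN)}$. This yields pointwise convergence at every $x \in \cX$.

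For uniform convergence on a compact $K \subset \cX$: boundedness of $\cE_\cX$ makes $\cE_\cX(K) \subset \ell^2(\bN)$ compact, and by continuity of $G$ the image $G(\cE_\cX(K))$ is compact as well. Since compactness in $\ell^2(\bN)$ is equivalent to boundedness together with uniform tail decay, (i) tends to zero uniformly on $K$; uniform continuity of $G$ on a compact neighborhood of $\cE_\cX(K)$ combined with $\sup_{x \in K} \norm{\cE_\cX(x) - Q_n \cE_\cX(x)}_{\ell^2(\bN)} \to 0$ gives uniform convergence in (ii); and the uniform $\ell^2$-bound $\sup_{x \in K} \norm{\cE_\cX(x)}_{\ell^2(\bN)} < \infty$ ensures (iii) $\le 1/n$ for all large $n$, independently of $x \in K$. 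The main conceptual obstacle is to orchestrate a \emph{single} sequence $\widetilde\cG_n$ that simultaneously serves \emph{every} compact $K$; calibrating the NN design radius to grow with $n$ resolves this, since any given compact $K$ eventually falls within that design range.
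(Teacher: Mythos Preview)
Your strategy coincides with the paper's: truncate input and output coordinates, invoke Leshno--Lin--Pinkus--Schocken componentwise on a growing cube $[-n,n]^n$, and let the design radius grow with $n$ so that a single sequence works for every compact $K$. The three-term error splitting you use is a mild reordering of the paper's; in particular your term (i) is the output tail of $G(\cE_\cX(x))$ itself, whereas the paper bounds the tail of $G$ at the \emph{truncated} input --- your choice is arguably cleaner since compactness of $G(\cE_\cX(K))$ is immediate.

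There is, however, one genuine slip in your justification of term~(ii). You invoke ``uniform continuity of $G$ on a compact neighborhood of $\cE_\cX(K)$'', but $\ell^2(\bN)$ is not locally compact, so no such neighborhood exists. What you actually need is uniform continuity of $G$ on a compact set containing \emph{both} $\cE_\cX(x)$ and $Q_n\cE_\cX(x)$ for all $x\in K$ and all $n$. This is exactly the content of the paper's preparatory Lemma~\ref{lem:S}: if $E\subset\ell^2(\bN)$ is compact, then
\[
S := E \cup \{(c_1,\dots,c_n,0,0,\dots):\bc\in E,\ n\in\bN\}
\]
is again compact. Applying this with $E=\cE_\cX(K)$ gives a compact set on which $G$ is uniformly continuous and which contains every pair $(\cE_\cX(x),Q_n\cE_\cX(x))$, closing the gap. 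Alternatively, a direct subsequence argument works: if $\sup_{x\in K}\|G(\cE_\cX(x))-G(Q_n\cE_\cX(x))\|$ did not tend to zero, extract $x_{n_k}\to x^*$ in $K$; then $Q_{n_k}\cE_\cX(x_{n_k})\to\cE_\cX(x^*)$ as well, and continuity of $G$ yields a contradiction. Either route repairs your argument; the remainder of your proposal is correct.
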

        The proof is given in Appendix \ref{app:ProofUni}.
We remark that Theorem~\ref{thm:Universality} 
implies in particular universality of the 
DON architecture \eqref{eq:DONarch} for 
maps between the Hilbertian Sobolev spaces 
$H^s(D)$ and $H^{s'}(D')$ for $s,s' \in \R$.
\begin{rem}\label{rem:nonlocal}
	We observe that the expressions \eqref{eq:Xencod}, \eqref{eq:Yencod} 
	for the coefficient sequences $\bc$ are ``\emph{nonlocal}'' 
	in terms of the function space inputs $x$ and $y$.
	Non-locality was highlighted in \cite{lanthaler2023nonlocal} 
	as important prerequisite for universality of a number of DON architectures.
        In their analysis, the authors also examined the possibility
        of using neural network approximations for the encoder and
        decoder mappings when $\cX$ and $\cY$ are function spaces on
        bounded domains.  Although it is technically possible to
        emulate $\cD_\cY$ and $\cE_\cX$ using DNNs in our context, we
        opted to not elaborate on this, in order to not overload the
        presentation.
\end{rem}

	\section{Dimension Truncation}
	\label{sec:dim_trunc}
	%
	Taking the cue from \cite{herrmann2022neural}, 
        within the architecture \eqref{eq:Gstrct}, 
        we approximate $\cG$ on finite-parametric subspaces of $\cX$.
        This implies that encoding will contain a form of ``dimension-truncation'' 
        where only a finite number $N$ of parameters in otherwise infinite-parametric,
        equivalent representations of inputs from $\cX$ are retained. 
        Importantly, due to the index-dependent relative importance of the 
        component maps $(g_j, j\in\bN)$ of the parametric map $G$ in \eqref{eq:Gstrct}.
        In Section~\ref{sec:SmthCl} we introduce the sets of admissible inputs
        for the ensuing expression rate analysis. We characterize in particular
        higher regularity of inputs and outputs in terms of weighted summability 
        of the coefficient sequences resulting from encoding in $\cX$ and $\cY$.
        \subsection{Smoothness Classes} 
        \label{sec:SmthCl}
	Let $r>0$, $s>\frac{1}{2}$ and define the ``cubes''
	\begin{equation*}
		C_r^s(\cX)
                :=
                \set{x\in\cX: \cE_{\cX}(x)\in \bigtimes_{i\in\bN}[-rw_i^s, rw_i^s]}
		=
		\set{x\in\cX: \sup_{i\in\bN} \abs{\inp{x}{\widetilde\psi_i}_\cX w_i^{-s}}\le r}.
	\end{equation*} 
	Note that for any $s'\in[0, s-\frac{1}{2})$ 
        there holds $C_r^s(\cX)\subset \cX^{s'}$ by \cite[Remark 3.2]{herrmann2022neural}. 
        Further, let 
	\begin{equation*}
		B_r(\cX^s):=\set{x\in\cX^s: \|x\|_{\cX^s}\le r}
	\end{equation*}
	denote the closed ball with radius $r>0$ in $\cX^s$. 
	Then, for any $\eps>0$ and
        with $r_\eps^2:=r^2 \sum_{i\in\bN} w_i^{1+2\eps} \in(0,\infty)$ 
        it holds
	\begin{equation}\label{eq:ball-cube-relation}
		B_r(\cX^s)\subseteq C_r^s(\cX) \subseteq B_{r_\eps}(\cX^{s-\frac{1}{2}-\eps}).
	\end{equation}
	Hence, error bounds on approximations of $\cG$ on $C_r^s(\cX)$ 
        trivially imply bounds on the closed ball $B_r(\cX^s)$.
	
	We fix the following assumption to derive finite-dimensional surrogates for $\cG$. 
	\begin{assumption}\label{ass:lipschitz}
		There exist $s>\frac{1}{2}$, $t$, $r>0$ and a constant $L_\cG>0$ such that 
                $\cG[C_r^s(\cX)]\in \cY^t$ and
		\begin{equation}\label{eq:lipschitz}
			\|\cG(x)-\cG(x')\|_{\cY^t}\le L_\cG \|x-x'\|_\cX, \quad x,x'\in C_r^s(\cX).
		\end{equation}
	\end{assumption}

	\begin{rem}
		The global Lipschitz continuity in Assumption~\ref{ass:lipschitz} 
                implies the linear growth bound
		\begin{equation}\label{eq:lin_growth}
			\|\cG(x)\|_{\cY^t}\le C(1+\|x\|_{\cX}),\quad x\in C_r^s(\cX),
		\quad \mbox{with}\quad  C:=\max(L_\cG, \norm{\cG(0)}_{\cY^t}).
		\end{equation}
	\end{rem}
	\subsection{Decoding Dimension Truncation}
	\label{sec:output_trunc}
	%
	For $N\in\bN$ define the \emph{restriction operator} 
        $\cR_N$ for sequences in $\ell^2(\bN)$ 
        via
	\begin{equation*}
		\cR_N:\ell^2(\bN)\to \ell^2(\bN), \quad 
                \bc=(c_i,i\in\bN) \mapsto (c_1,\dots, c_N, 0,0,\dots).
	\end{equation*} 
	We define the \emph{$N$-term output-truncated} approximation 
        $\cG_N:\cX\to\cY$ of $\cG$ 
        via 
	\begin{equation}\label{eq:NtermTrnc}
		\cG_N:=\cD_\cY\circ \cR_N \circ\cE_\cY\circ \cG.
	\end{equation} 
	
	\begin{prop}\label{prop:output_trunc}
		Under Assumption~\ref{ass:lipschitz}
                there exists a constant $C>0$ such that for 
                all $N\in\bN$ 
		\begin{equation*}
			\sup_{x\in C_r^s(\cX)} \norm{\cG(x)-\cG_N(x)}_\cY \le C w_{N+1}^t.
		\end{equation*}
	\end{prop}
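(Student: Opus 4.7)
The plan is to exploit the linearity of $\cD_\cY$, $\cR_N$, $\cE_\cY$ together with the Riesz basis inequality \eqref{eq:norm_equiv} and the definition of the weighted norm $\|\cdot\|_{\cY^t}$, and then invoke the linear growth bound \eqref{eq:lin_growth} to obtain a uniform constant.

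First, since $\cD_\cY \circ \cE_\cY = I_\cY$, I would rewrite
\begin{equation*}
\cG(x) - \cG_N(x) = \cD_\cY\bigl((I_{\ell^2} - \cR_N)\,\cE_\cY(\cG(x))\bigr) = \sum_{j>N} \inp{\cG(x)}{\widetilde\eta_j}_\cY \, \eta_j.
\end{equation*}
Applying the upper Riesz bound from \eqref{eq:norm_equiv} to the tail sequence then yields
\begin{equation*}
\|\cG(x) - \cG_N(x)\|_\cY^2 \le \gL_\cY \sum_{j>N} \inp{\cG(x)}{\widetilde\eta_j}_\cY^2.
\end{equation*}

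Next, I would use that the weights $w_j$ are non-increasing and positive, so that for every $j>N$ we have $w_j \le w_{N+1}$, hence $w_j^{2t} \le w_{N+1}^{2t}$ (recall $t\ge 0$, as is required for $\cY^t$ to be a smoothness space). Inserting the factor $w_j^{-2t}w_j^{2t}=1$ gives
\begin{equation*}
\sum_{j>N} \inp{\cG(x)}{\widetilde\eta_j}_\cY^2 \le w_{N+1}^{2t} \sum_{j>N} \inp{\cG(x)}{\widetilde\eta_j}_\cY^2 w_j^{-2t} \le w_{N+1}^{2t}\,\|\cG(x)\|_{\cY^t}^2,
\end{equation*}
where Assumption~\ref{ass:lipschitz} ensures the right-hand side is finite.

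Finally, I would bound $\|\cG(x)\|_{\cY^t}$ uniformly on $C_r^s(\cX)$. By the linear growth estimate \eqref{eq:lin_growth} it suffices to control $\|x\|_\cX$ for $x\in C_r^s(\cX)$. Using the Riesz upper bound on $\mathbf\Psi_\cX$ and the definition of $C_r^s(\cX)$,
\begin{equation*}
\|x\|_\cX^2 \le \gL_\cX \sum_{i\in\bN} \inp{x}{\widetilde\psi_i}_\cX^2 \le \gL_\cX r^2 \sum_{i\in\bN} w_i^{2s},
\end{equation*}
and since $s>\tfrac12$ and $\bw\in\ell^{1+\eps}(\bN)$ for every $\eps>0$ (choosing $\eps\in(0,2s-1]$ and using $w_i\in(0,1]$) this last sum is finite. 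Combining the three displays with the constant from \eqref{eq:lin_growth} produces a single constant $C>0$, independent of $N$ and $x\in C_r^s(\cX)$, such that $\|\cG(x) - \cG_N(x)\|_\cY \le C\, w_{N+1}^t$, as claimed.

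There is no serious obstacle: the argument is a direct decomposition plus monotonicity of the weights. The only point that requires a moment of care is verifying that $C_r^s(\cX)$ is bounded in $\cX$, which hinges precisely on the hypothesis $s>\tfrac12$ together with the $\ell^{1+\eps}$-summability of $\bw$.
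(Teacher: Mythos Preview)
Your proof is correct and follows essentially the same route as the paper: write the error as the tail $\sum_{j>N}\inp{\cG(x)}{\widetilde\eta_j}_\cY\eta_j$, apply the Riesz upper bound, pull out $w_{N+1}^{2t}$ by monotonicity to recover $\|\cG(x)\|_{\cY^t}^2$, and then use the linear growth bound \eqref{eq:lin_growth} together with $s>\tfrac12$ to obtain a uniform constant on $C_r^s(\cX)$. Your justification that $\sum_{i\in\bN}w_i^{2s}<\infty$ is in fact slightly more explicit than the paper's.
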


	\begin{proof}
		For any $x\in C_r^s(\cX)$ and $\gd>0$ it holds by Assumption~\ref{ass:lipschitz} that
		\begin{align*}
                  \norm{\cG(x)-\cG_N(x)}_\cY^2
                  &=\norm{\sum_{j>N} \inp{\cG(x)}{\widetilde\eta_j}_\cY \eta_j}_\cY^2 \\
                  &\le \gL_\cY \sum_{j>N} \inp{\cG(x)}{\widetilde\eta_j}_\cY^2 w_j^{2t-2t} \\
                  &\le \gL_\cY w_{N+1}^{2t} \sum_{j>N} \inp{\cG(x)}{\widetilde\eta_j}_\cY^2 w_j^{-2t} \\
                  &\le \gL_\cY w_{N+1}^{2t} \norm{\cG(x)}_{\cY^t}^2,
		\end{align*}
		where the second inequality holds 
                since $\bw$ is a sequence of decreasing positive real numbers.
		Furthermore, by~\eqref{eq:lin_growth}, 
                since $\mathbf\Psi_\cY$ is a Riesz basis and due to $s>\frac{1}{2}$,
                it holds
		\begin{align*}
			\norm{\cG(x)}_{\cY^t}^2
			\le C^{2}(1+\norm{x}_\cX)^{2}
			\le C^2\left(1+\gL_\cX \sum_{i\in\bN}\inp{x}{\widetilde\psi_i}_\cX^2\right)
			\le C^2\left(1+\gL_\cX r^2\sum_{i\in\bN}w_i^{2s}\right)
		\end{align*}
        where the last term is finite and independent of $x\in C_r^s(\cX)$. This shows the claim.
	\end{proof}
	\subsection{Encoding Dimension Truncation}
	\label{sec:input_trunc}
	%
	Assume that we have fixed a truncation index $N$ for $\cG_N$
        in Subsection~\ref{sec:output_trunc}.
	We now choose \emph{component-dependent truncations of the encoded inputs}
        $M_j\in\bN$ for component $j$ of $G$ and 
        define corresponding scalar-valued mappings
	\begin{equation}\label{eq:g_j}
          g_j:\ell^2(\bN)\to \bR, \quad \bc\mapsto 
          \inp{[\cG\circ\cD_\cX\circ \cR_{M_j}](\bc)}{\widetilde\eta_j}_\cY
          =
          \left\langle\cG\left(\sum_{i=1}^{M_j}c_i\psi_i\right), \widetilde\eta_j\right\rangle_\cY \;, 
          \;\; j=1,\dots,N \;.
	\end{equation}
	We then define the multi-index (of truncation indices)
        $\mathbf{M}:=(M_1,\dots, M_N)\in\bN^N$ and the
        \emph{input-truncated approximation}
        $\cG_N^\mathbf{M}:\cX\to\cY$ to $\cG_N$ 
        via
	\begin{equation}\label{eq:output_trunc}
          \cG_N^\mathbf{M}:\cX\to\cY, \quad x\mapsto 
          \cD_{\cY}\left(([g_1\circ \cE_\cX](x), \dots, [g_N\circ \cE_\cX](x), 0, 0, \dots)\right)
          =
          \sum_{j=1}^N g_j(\cE_\cX(x))\eta_j. 
	\end{equation} 
        We observe that \eqref{eq:output_trunc} has the
        ``encoder-decoder'' structure that appears in a number of
        recently considered DON architectures.  We refer to the
        nonlocal neural operators \cite[Eqn.~(2.5) and
          App.~A.4]{lanthaler2023nonlocal},
        \cite{lanthaler2023operator}, to the deep ONets
        \cite[Eqn.~(33)]{NeurOp}, and to the references in
        \cite{NeurOp} for further DON architectures.  The next result
        has general expression rate bounds for
        \eqref{eq:output_trunc} which will apply in particular to
        the mentioned settings.

	\begin{prop}\label{prop:input_trunc}
          Under Assumption~\ref{ass:lipschitz}, for any $\gd>0$ exists
          a constant $C>0$, such that for any $\mathbf{M}\in\bN^N$
          there holds
          \begin{equation}\label{eq:input_trunc1}
            \sup_{x\in C_r^s(\cX)} \norm{\cG_N(x)-\cG_N^\mathbf{M}(x)}_\cY 
            \le C 
            \max_{j=1,\dots,N}  w_j^{t-\frac{1}{2}-\gd} w_{M_j+1}^{s-\frac{1}{2}-\gd}.
          \end{equation}
          In case that $\mathbf{M}=(M,\dots, M)\in\bN^N$ for some $M\in \bN$, 
          \begin{equation}\label{eq:input_trunc2}
            \sup_{x\in C_r^s(\cX)} \norm{\cG_N(x)-\cG_N^\mathbf{M}(x)}_\cY \le C w_{M}^{s-\frac{1}{2}-\gd}.
          \end{equation}
	\end{prop}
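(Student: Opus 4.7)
My plan is to reduce the proof to a componentwise Lipschitz bound followed by careful exponent bookkeeping. Writing $x^{M_j}:=\cD_\cX(\cR_{M_j}\cE_\cX(x))=\sum_{i=1}^{M_j}\langle x,\widetilde\psi_i\rangle_\cX\psi_i$, the cube condition $|\langle x,\widetilde\psi_i\rangle_\cX|\le r w_i^s$ is preserved under truncation, so $x^{M_j}\in C_r^s(\cX)$ and Assumption~\ref{ass:lipschitz} applies at the pair $(x,x^{M_j})$. From $\cG_N(x)-\cG_N^{\mathbf{M}}(x)=\sum_{j=1}^N \langle \cG(x)-\cG(x^{M_j}),\widetilde\eta_j\rangle_\cY\,\eta_j$, the Riesz upper bound in $\cY$ combined with the elementary inequality $|\langle y,\widetilde\eta_j\rangle_\cY|^2\le w_j^{2t}\|y\|_{\cY^t}^2$ (immediate from the definition of $\|\cdot\|_{\cY^t}$) and \eqref{eq:lipschitz} gives
\[
\|\cG_N(x)-\cG_N^{\mathbf{M}}(x)\|_\cY^2 \,\le\, \gL_\cY L_\cG^2 \sum_{j=1}^N w_j^{2t}\,\|x-x^{M_j}\|_\cX^2.
\]

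Next I would bound the $\cX$-norm tail uniformly on the cube. By the Riesz upper bound on $\cX$ and the cube condition, $\|x-x^{M_j}\|_\cX^2 \le \gL_\cX r^2 \sum_{i>M_j} w_i^{2s}$. For any $\delta\in(0,s-1/2)$, I split $w_i^{2s}=w_i^{1+2\delta}\cdot w_i^{2s-1-2\delta}$, bound the second factor by $w_{M_j+1}^{2s-1-2\delta}$ (monotonicity of $\bw$), and absorb $\sum_{i>M_j} w_i^{1+2\delta}\le\|\bw\|_{\ell^{1+2\delta}(\bN)}^{1+2\delta}$, which is finite by the standing hypothesis $\bw\in\ell^{1+\eps}(\bN)$ for all $\eps>0$. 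This yields $\|x-x^{M_j}\|_\cX^2 \le C_\delta\, w_{M_j+1}^{2(s-1/2-\delta)}$.

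The same splitting, now applied to the outer $j$-sum via $w_j^{2t}=w_j^{1+2\delta}\cdot w_j^{2(t-1/2-\delta)}$, converts the sum into a maximum at the cost of the finite factor $\|\bw\|_{\ell^{1+2\delta}(\bN)}^{1+2\delta}$, yielding \eqref{eq:input_trunc1} after taking square roots (the trivial regime $\delta\ge s-1/2$ is absorbed into the constant). For \eqref{eq:input_trunc2}, all $x^{M_j}$ coincide when $\mathbf{M}=(M,\dots,M)$, so the bookkeeping simplifies: one estimates $\|\cG_N(x)-\cG_N^{\mathbf{M}}(x)\|_\cY^2\le \gL_\cY\sum_{j\in\bN}|\langle \cG(x)-\cG(x^M),\widetilde\eta_j\rangle_\cY|^2\le (\gL_\cY/\gl_\cY)\|\cG(x)-\cG(x^M)\|_\cY^2$, after which $\|\cdot\|_\cY\le\sqrt{\gL_\cY}\|\cdot\|_{\cY^t}$ (valid for $t\ge 0$ since $w_j\in(0,1]$), Assumption~\ref{ass:lipschitz}, and the tail bound above conclude. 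The only nontrivial point is the exponent bookkeeping in \eqref{eq:input_trunc1}: the $\delta$-loss in both the $s$- and $t$-exponents arises from the same H\"older-style splitting used twice, once on the input tail and once on the output $j$-sum, and this is exactly where the hypothesis $\bw\in\ell^{1+\eps}(\bN)$ for every $\eps>0$ is consumed.
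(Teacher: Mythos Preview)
Your proof is correct and follows essentially the same approach as the paper's own proof: both start from the Riesz upper bound in $\cY$, use the elementary inequality $|\langle y,\widetilde\eta_j\rangle_\cY|^2\le w_j^{2t}\|y\|_{\cY^t}^2$ together with Assumption~\ref{ass:lipschitz}, bound the $\cX$-tail via the cube condition and the Riesz upper bound on $\cX$, and then split the exponents twice (once for the $i$-tail, once for the $j$-sum) using $\bw\in\ell^{1+2\delta}(\bN)$ to pass from sums to maxima. Your treatment of \eqref{eq:input_trunc2} via $\sum_{j\in\bN}|\langle y,\widetilde\eta_j\rangle_\cY|^2\le\gl_\cY^{-1}\|y\|_\cY^2$ followed by $\|\cdot\|_\cY\le\sqrt{\gL_\cY}\|\cdot\|_{\cY^t}$ is a minor variation on the paper's direct step, and your explicit remark on the trivial regime $\delta\ge s-\tfrac12$ is a small addition not spelled out in the paper.
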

	
	\begin{proof}
          For any $x\in C_r^s(\cX)$ it holds
          by~\eqref{eq:output_trunc} and~\eqref{eq:g_j} that
          \begin{equation}
            \begin{split}\label{eq:input1}
              \norm{\cG_N(x)-\cG_N^\mathbf{M}(x)}_\cY^2 &=
              \norm{\sum_{j=1}^N
                \inp{\cG(x)}{\widetilde\eta_j}_\cY\eta_j
                - g_j(\cE_\cX(x))\eta_j}_\cY^2 \\
              &\le \gL_\cY \sum_{j=1}^N \abs{\inp{\cG(x)}{\widetilde\eta_j}_\cY-g_j(\cE_\cX(x))}^2 \\
              &=\gL_\cY \sum_{j=1}^N
              \left\langle\cG(x)-\cG\left(\sum_{i=1}^{M_j}\inp{x}{\widetilde\psi_i}_\cX\psi_i\right), \widetilde\eta_j\right\rangle_\cY^2
              w_j^{2t-2t}.
            \end{split}
          \end{equation}
          Now, for arbitrary $\mathbf M =(M_j)_{j=1}^N\in\bN^N$,
          the Lipschitz continuity of $\cG$ on $C_r^s(\cX)$ yields for
          any $\gd>0$ that
          \begin{align*}
            \norm{\cG_N(x)-\cG_N^\mathbf{M}(x)}_\cY^2
            &\le \gL_\cY \sum_{j=1}^N w_j^{2t} \norm{\cG(x)-\cG\left(\sum_{i=1}^{M_j}\inp{x}{\widetilde\psi_i}_\cX\psi_i\right)}_{\cY^t}^2\\
            &\le \gL_\cY L_\cG^2 \sum_{j=1}^N w_j^{2t}
              \norm{\sum_{i>M_j}\inp{x}{\widetilde\psi_i}_\cX\psi_i}_\cX^2\\
            &\le \gL_\cY L_\cG^2 \sum_{j=1}^N w_j^{2t}
              \gL_\cX \abs{\sum_{i>M_j}\inp{x}{\widetilde\psi_i}_\cX^{2}} \\
            &\le \gL_\cY L_\cG^2 \gL_\cX r^2\sum_{j=1}^N w_j^{2t}
              \sum_{i>M_j} w_i^{2s} \\
            &\le C \sum_{j=1}^N w_j^{2t} w_{M_j+1}^{2(s-\frac{1}{2}-\gd)} \sum_{i>M_j} w_i^{1+2\gd}\\
            &\le C \max_{j=1,\dots, N} w_j^{2(t-\frac{1}{2}-\gd)} w_{M_j+1}^{2(s-\frac{1}{2}-\gd)},
          \end{align*}
          which shows \eqref{eq:input_trunc1}.

            To show \eqref{eq:input_trunc2}, fix $M\in\bN$ and $\mathbf M=(M\dots, M)$.
            Then, using \eqref{eq:input1} and~\eqref{eq:norm_equiv}, 
          for any $\gd>0$
          \begin{align*}
            \norm{\cG_N(x)-\cG_N^\mathbf{M}(x)}_\cY^2
            &\le \gL_\cY \gl_\cY^{-1}\norm{\cG(x)-\cG\left(\sum_{i=1}^{M}\inp{x}{\widetilde\psi_i}_\cX\psi_i\right)}_{\cY^t}^2\\
            &\le \gL_\cY \gl_\cY^{-1} L_\cG^{2}\norm{\sum_{i>M_j}\inp{x}{\widetilde\psi_i}_\cX\psi_i}_\cX^2\\ 
            &\le \gL_\cY \gl_\cY^{-1} L_\cG^{2} \gL_\cX \abs{\sum_{i>M}\inp{x}{\widetilde\psi_i}_\cX^{2}} \\
            &\le C w_{M+1}^{2(s-\frac{1}{2}-\gd)}.
          \end{align*}
          This concludes the proof.
	\end{proof}

	\begin{rem}\label{rmk:TrncChoice}
        Proposition~\ref{prop:input_trunc} suggests 
          the following strategy to choose the truncation parameters
          $M_j$:
          \begin{itemize}
          \item $t>\frac 1 2$: For $\gd>0$ small enough it holds
            $t-\frac12-\gd>0$ and thus $w_j^{t-\frac12-\gd}\le 1$ for
            all $j\in\mathbb{N}$. Hence \eqref{eq:input_trunc1} is sharper
            than \eqref{eq:input_trunc2}, and a choice for
            $\mathbf{M}$ minimizing $\sum_{j=1}^N M_j$ while not
            increasing the best upper bound in
            \eqref{eq:input_trunc1}-\eqref{eq:input_trunc2} is
            obtained if
            $w_j^{t-\frac12-\gd}w_{M_j+1}^{s-\frac12-\gd}\sim {\rm
              const}$.
          \item $t\le \frac 1 2$: In this case $t-\frac12-\gd < 0$ and
            thus $w_j^{t-\frac12-\gd} \ge 1$ for all $j\in\mathbb{N}$.
            Hence \eqref{eq:input_trunc2} is sharper than
            \eqref{eq:input_trunc1}, and therefore a choice for
            $\mathbf{M}$ minimizing $\sum_{j=1}^N M_j$ while not
            increasing the best upper bound in
            \eqref{eq:input_trunc1}-\eqref{eq:input_trunc2} is
            obtained if $M_j=M$ for all $j$.
          \end{itemize}
\end{rem}
          \section{Deep Operator Surrogates}
          \label{sec:nn_approx}
	%
          The dimension truncation in $\cX$ and $\cY$ from the
          previous section effectively yields a finite-dimensional
          approximation $\cG_N^\mathbf{M}$ to $\cG$.  In the next
          step, we replace the dimension-truncated,
          finite-parametric, nonlinear coordinate map $\cG_N^\mathbf{M}$ 
          by an \emph{approximator}, i.e., 
          by neural network surrogate maps, and
          bound the resulting overall approximation error.  
          We further
          estimate the number of parameters (degrees of freedom) in
          the network, that are necessary to achieve a prescribed
          error tolerance $\eps>0$.
	
          We fix the following assumption on the approximation of
          $d$-variate Lipschitz functions.
          \begin{assumption}\label{ass:surrogate}
            Let $d\in\bN$ and $f:[0,1]^d\to\bR$ be a Lipschitz
            continuous function with Lipschitz constant $L_f>0$.

            Then, for any $\eps\in(0,1]$, 
            there exists a (neural network) surrogate 
            $\widetilde f:[0,1]^d\to\bR$ 
            with at most
            $\cO(d^{\ga}\eps^{-\gb}(1+\log(d)+\abs{\log(\eps)})^{\gk})$
            many parameters such that
            \begin{equation*}
              \norm{f-\widetilde f\,}_{L^2([0,1]^d)}\le L_f \eps.
            \end{equation*}
            The constants $\ga\ge 1$ and $\gb,\gk\ge 0$
            and the hidden constant in $\cO(\cdot)$ are
            independent of $d$, $L_f$ and $\eps$.
          \end{assumption}
	
          Assumption~\ref{ass:surrogate} holds for several DNN
          architectures and activations.  A (non-exhaustive)
          collection of examples is provided in
          Table~\ref{table:NN-architectures}.
          \begin{table}[ht]
            \centering
            \begin{tabular}{ |c||c|c|c|c||c|c|| c| }
              \hline
              Architecture & Activations & $\#$ of parameters & Assumption~\ref{ass:surrogate} \\
              \hline\hline
              Feedforward DNNs \cite{yarotsky2017error} & ReLU & 
         $\cO(d^2(1+\log(d))^2\eps^{-d}(1+\abs{\log(\eps)})^2)$ & \xmark \\ \hline
              NestNets \cite{shen2022neural} & ReLU &
      $\cO(\mathfrak h^2d^{2+d/(2(\mathfrak h+1))}\eps^{-d/(\mathfrak h+1)})$ for any $\mathfrak h\in\bN$ & (\cmark) \\ \hline 
              FLES \cite{shen2021neural} & $\floor{\cdot}$, $2^x$, $\sin$ &
                $\cO(d(1+\log(d) + \abs{\log(\eps)}))$ & \cmark \\ \hline
              Deep Fourier \cite{yarotsky2020phase} & ReLU, $\sin$ &
                 $\cO(d{(1+\abs{\log(\eps)})^2})$ & \cmark \\ \hline
            \end{tabular}
            \caption{\small Complexity of several neural network
              architectures to approximate Lipschitz functions
              $f:[0,1]^d\to \bR$ for given dimension $d\in\bN$ in
              $L^2([0,1]^d)$ with tolerance $\eps\in(0,1]$. 
              The parameter
              $\mathfrak h\in\bN$ in the second row signifies the
              ``height'' of a NestNet architecture, see also
              Example~\ref{ex:NestNet}.  }
            \label{table:NN-architectures}
          \end{table}
	
          \begin{ex}[NestNets] \label{ex:NestNet} In
            \cite{shen2022neural} the authors introduced a novel
            neural network architecture, dubbed \emph{NestNets}.  
            The
            proposed networks augment the classical, two-dimensional
            feedforward network (fully connected with size parameters
            width and depth) by a third dimension, termed ``height''
            in \cite{shen2022neural}, and indexed by
            ${\mathfrak h}\in\bN$.  Classical feedforward networks are
            contained as NestNets of height ${\mathfrak h}=1$.  By
            a bit-extraction technique, the authors showed
            that strict ReLU activated NestNets can express
            high-dimensional Hölder-continuous functions
            $f:[0,1]^d\to\bR$ in $L^p$ for $p<\infty$ without the CoD,
            \emph{in terms of the number of neurons constituting the
              NestNet}.  
            Moreover, by increasing the height
            ${\mathfrak h}$ of the network, one obtains in principle
            arbitrary fast algebraic rates of convergence with respect
            to the number of neurons which are free of the CoD,
            cf. Table~\ref{table:NN-architectures}.  
            In the NestNet architecture of \cite{shen2022neural}, 
            super-expressivity holds with strictly ReLU activated NestNets. 
            I.e., super-expressivity is afforded by 
            architecture, specifically the vastly larger
            connectivity of NestNets with height ${\mathfrak h}>1$, 
            rather than by more sophisticated activations.
            However, in one forward pass through a NestNet with height
            ${\mathfrak h}\ge 2$, the parameters are applied
            repeatedly to transform the input, with the number of
            floating points operations in a forward evaluation of the
            NestNet growing exponentially in ${\mathfrak h}$.  Thus,
            strictly speaking, \emph{NestNets overcome the CoD with
              respect to the number of neurons, but they do not lift
              the curse of dimensionality with respect to the number
              of floating point operations}.
          \end{ex}
	
\begin{rem}[NestNets and Skip Connections]
  NestNets of height ${\mathfrak h}>1$ could be viewed as extreme
  cases of feedforward NNs with skip connections.  
  Insertion of skip connections in, for example, feedforward DONs 
  has been \emph{empirically}, i.e., in numerical tests, 
  found to enhance DON expressivity significantly.  
  See, e.g.\ \cite[Section~2.4]{FineTunSNOs}.
\end{rem}

 \begin{ex}[Superexpressive activations] 
   In \cite{shen2021neural} the authors introduce so-called
   \emph{Floor-Exponential-Step} (FLES) networks with three hidden
   layers and a combination of floor ($x\mapsto \floor{x}$),
   exponential $x\mapsto 2^x$ and binary step units
   $x\mapsto\indi_{x\ge 0}$ as activation functions. Relying on a
   similar bit extraction technique as for NestNets, the authors show
   that for any Lipschitz function $f:[0,1]^d\to\bR$ there is a
   FLES-NN $\widetilde f:[0,1]^d\to\bR$ with at most $\cO({d+N})$
   parameters such that
   \begin{equation*}
     \|f-\widetilde f\|_{L^\infty([0,1]^d)}\le 6L_f\sqrt{d}2^{-N},\quad N\in\bN ,
   \end{equation*}
   see \cite[Corollary 1.2]{shen2021neural}.  
   
   For fixed $\eps\in(0,1]$ and $d\in\bN$, 
   	let $N_\eps:=\ceil{\log_2(6\sqrt{d})+|\log_2(\eps)|}\in\bN$, 
   	such that there holds $6\sqrt{d}2^{-N_\eps}\le \eps$. 
   	Then, there is a $C>0$ such that there holds
   	$$
        N_\eps\le 1+\log_2(6\sqrt{d}) + |\log_2(\eps)| \le C(1+\log(d)+|\log(\eps)|).
        $$
 	The total number of parameters $\widetilde N\in\bN$ of the FLES-NN $\widetilde f$ 
        is bounded by 
$$
        \widetilde N \le C(d+N) 
 	\le C(d+ 1+\log(d)+|\log(\eps)| )
 	\le Cd(1+\log(d)+\abs{\log(\eps)}) .
$$
   	Consequently,
   	Assumption~\ref{ass:surrogate} holds for FLES feedforward NNs with
   	$\ga =\gk = 1$ and $\gb=0$.

   This "superexpressivity" has been generalized for various
   elementary activation functions in~\cite{yarotsky2021elementary}.
   Therein, the author shows that the approximation technique for FLES
   in \cite{shen2021neural} may be transferred to several classes of
   elementary, smooth activations.  The results
   in~\cite{yarotsky2021elementary} are stated for general, uniformly
   continuous functions $f:[0,1]^d\to\bR$ with respect to the supremum
   norm, thus no rates are derived.  However, since the line of proof
   closely follows~\cite{shen2021neural}, one would expect the same
   exponential rates for feed forward networks with super expressive
   smooth activations.  On a further note, Yarotski emphasizes
   in~\cite[Section 3]{yarotsky2021elementary} that most standard
   activation functions like ReLU, $\tanh$, sigmoid, binary step
   units, etc.\ are \emph{not} superexpressive.
 \end{ex}

 To construct the surrogate operator networks, we approximate for each
 $j$ the parametric co-ordinate maps $g_j:\ell^2(\bN)\to\bR$
 in~\eqref{eq:g_j}.  In order to "restrict" $g_j$ to the
 finite-dimensional domain $\bR^{M_j}$, we introduce the restriction
 maps
 \begin{equation}\label{eq:coordinate_g_j}
   \mfg_j:\bR^{M_j}\to \bR, \quad 
   (c_1,\dots, c_n)\mapsto 
   \left\langle\cG\left(\sum_{i=1}^{M_j}c_i\psi_i\right), \widetilde\eta_j\right\rangle_\cY, 
   \qquad j=1,\dots, N.
 \end{equation}
 We further introduce the \emph{projection operator} $\Pi_{M_j}$ via
 \begin{equation*}
   \Pi_{M_j}:\ell^2(\bN)\to \bR^{M_j}, \quad \bc \mapsto (c_1,\dots, c_{M_j}),
 \end{equation*}
 to obtain the identity
 \begin{equation}\label{eq:g_j_relation}
   g_j(\bc) = \mfg_j(\Pi_{M_j}(\bc)), \quad c\in \ell^2(\bN).
 \end{equation}
 In the next step, we establish the Lipschitz continuity of $\mfg_j$.
 \begin{lem}\label{lem:g_Lipschitz}
   Let Assumption~\ref{ass:lipschitz} hold and let
   $\mfg_j:\bR^{M_j}\to\bR$ be defined as in~\eqref{eq:coordinate_g_j}
   for a given truncation index $M_j\in\bN$.  
   Then, for any $c$,
   $c' \in \bigtimes_{i=1}^{M_j}[-rw_i^s, rw_i^s]\subset \bR^{M_j}$
   there holds
   \begin{equation}\label{eq:lipschitz2}
     \abs{\mfg_j(c)-\mfg_j(c')}
     \le L_\cG w_j^t 
     {\sqrt{\gL_\cX}}
     \|c-c'\|_2.
   \end{equation}
 \end{lem}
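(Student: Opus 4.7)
The plan is to reduce the estimate directly to Assumption~\ref{ass:lipschitz} via three short steps: (i) realise $\mfg_j$ as an inner product involving $\cG$ evaluated at finitely-supported elements of $C_r^s(\cX)$, (ii) dominate that inner product by the $\cY^t$-norm up to a factor $w_j^t$, and (iii) convert the resulting $\cX$-norm of a finite linear combination of $\psi_i$'s into the Euclidean norm of the coefficient vector using the upper Riesz bound $\gL_\cX$.

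\textbf{Step 1 (Reduction to $\cG$).} Given $c,c'\in\bigtimes_{i=1}^{M_j}[-rw_i^s, rw_i^s]$, set
\begin{equation*}
x := \sum_{i=1}^{M_j} c_i \psi_i, \qquad x' := \sum_{i=1}^{M_j} c_i' \psi_i.
\end{equation*}
By the biorthogonality $\inp{\psi_i}{\widetilde\psi_k}_\cX = \gd_{ik}$, the coefficients $\inp{x}{\widetilde\psi_i}_\cX$ equal $c_i$ for $i\le M_j$ and vanish otherwise, so $|\inp{x}{\widetilde\psi_i}_\cX w_i^{-s}| \le r$ for all $i\in\bN$ and therefore $x\in C_r^s(\cX)$; likewise $x'\in C_r^s(\cX)$. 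By definition \eqref{eq:coordinate_g_j},
\begin{equation*}
\mfg_j(c) - \mfg_j(c') = \inp{\cG(x)-\cG(x')}{\widetilde\eta_j}_\cY.
\end{equation*}

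\textbf{Step 2 (Extraction of the weight $w_j^t$).} Let $y:=\cG(x)-\cG(x')\in\cY^t$, which lies in $\cY^t$ thanks to Assumption~\ref{ass:lipschitz}. Writing $y = \sum_{k\in\bN}\inp{y}{\widetilde\eta_k}_\cY\eta_k$, the definition of the $\cY^t$-norm gives
\begin{equation*}
\inp{y}{\widetilde\eta_j}_\cY^2 \, w_j^{-2t} \le \sum_{k\in\bN}\inp{y}{\widetilde\eta_k}_\cY^2 w_k^{-2t} = \|y\|_{\cY^t}^2,
\end{equation*}
hence $|\inp{y}{\widetilde\eta_j}_\cY| \le w_j^t \|y\|_{\cY^t}$. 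Combining with the Lipschitz bound \eqref{eq:lipschitz}, which applies since $x,x'\in C_r^s(\cX)$,
\begin{equation*}
|\mfg_j(c)-\mfg_j(c')| \le w_j^t \|\cG(x)-\cG(x')\|_{\cY^t} \le L_\cG w_j^t \|x-x'\|_\cX.
\end{equation*}

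\textbf{Step 3 (Riesz upper bound).} Since $x-x' = \sum_{i=1}^{M_j}(c_i-c_i')\psi_i$ is a finite linear combination of elements of the Riesz basis $\mathbf\Psi_\cX$, the right-hand inequality in \eqref{eq:norm_equiv} yields
\begin{equation*}
\|x-x'\|_\cX^2 \le \gL_\cX \sum_{i=1}^{M_j}(c_i-c_i')^2 = \gL_\cX \|c-c'\|_2^2.
\end{equation*}
Inserting this into the bound from Step 2 gives \eqref{eq:lipschitz2}. None of the steps is a real obstacle; the only subtlety worth flagging in the write-up is checking that $x,x'$ indeed lie in $C_r^s(\cX)$ so that \eqref{eq:lipschitz} is applicable, which is immediate from biorthogonality.
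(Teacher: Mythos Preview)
Your proof is correct and follows essentially the same approach as the paper: define $x,x'\in C_r^s(\cX)$ from the coefficient vectors, bound the $\widetilde\eta_j$-coefficient by $w_j^t\|\cdot\|_{\cY^t}$, apply the Lipschitz assumption, and finish with the upper Riesz inequality. The only cosmetic difference is that the paper routes through the identity $\mfg_j=g_j\circ\Pi_{M_j}$ before writing the inner product, whereas you use the definition of $\mfg_j$ directly; your version is arguably cleaner.
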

 \begin{proof}
   Fix $c$, $c' \in \bigtimes_{i=1}^{M_j}[-rw_i^s, rw_i^s]$.  Then
   with $x:=\sum_{i=1}^{M_j}c_i\psi_i$,
   $x':=\sum_{i=1}^{M_j}c_i'\psi_i\in C_r^s(\cX)$ holds
   $c=\Pi_{M_j}(\cE_\cX(x))$ and $c'=\Pi_{M_j}(\cE_\cX(x'))$. Thus
   \begin{equation*}
     \abs{\mfg_j(c)-\mfg_j(c')}
     =
     \abs{\mfg_j([\Pi_{M_j}\circ \cE_\cX](x))-\mfg_j([\Pi_{M_j}\circ \cE_\cX](x'))}
     =
     \abs{g_j(\cE_\cX(x))-g_j(\cE_\cX(x'))}
   \end{equation*}
   holds by~\eqref{eq:g_j_relation}.
   Furthermore,
   \begin{align*}
     \abs{g_j(\cE_\cX(x))-g_j(\cE_\cX(x'))}^2 
     &=
       \left\langle \cG\left(\sum_{i=1}^{M_j}\inp{x}{\widetilde\psi_i}_\cX\psi_i\right)
       	-
       	\cG\left(\sum_{i=1}^{M_j}\inp{x'}{\widetilde\psi_i}_\cX\psi_i\right),
       	\widetilde\eta_j\right\rangle_\cY^2 
      \\
     &\le w_j^{2t} \norm{\cG\left(\sum_{i=1}^{M_j}\inp{x}{\widetilde\psi_i}_\cX\psi_i\right)-\cG\left(\sum_{i=1}^{M_j}\inp{x'}{\widetilde\psi_i}_\cX\psi_i\right)}_{\cY_t}^2 \\
     &\le w_j^{2t} L_\cG^2 \norm{\sum_{i=1}^{M_j}\inp{x-x'}{\widetilde\psi_i}_\cX\psi_i}_{\cX} 
     \\
     &\le w_j^{2t} L_\cG^2 \gL_\cX \sum_{i=1}^{M_j}\inp{x-x'}{\widetilde\psi_i}_\cX^2 
     \\
     &= w_j^{2t} L_\cG^2 \gL_\cX \sum_{i=1}^{M_j}\abs{\Pi_{M_j}(\cE_\cX(x))_i-\Pi_{M_j}(\cE_\cX(x'))_i}^2 
     \\
     &= w_j^{2t} L_\cG^2 \gL_\cX\norm{c-c'}_2^2.
   \end{align*}
 \end{proof}
	
	\begin{lem}\label{lem:lipschitz_approx}
          Let Assumption~\ref{ass:surrogate} hold, let $M\in\bN$ be
          arbitrary and define
          $ D_M:=\bigtimes_{i=1}^{M}[-rw_i^s, rw_i^s]$.  Denote by
          $\gl$ the univariate Lebesgue measure on $(\bR, \cB(\bR))$
          and by $\mu_M:=\bigotimes_{i=1}^M \frac{\gl}{2rw_i^s}$ the
          uniform probability measure on $ D_M \subset \bR^M$.
          Further, assume $g:D_M\to \bR$ is Lipschitz continuous with
          Lipschitz constant $L_g>0$.
          Then, for any $\eps\in(0,1]$, 
          there exists a neural network $\widetilde g:D_M\to \bR$ 
          with at most
          $\cO{(M^\ga\eps^{-\gb}(1+\log(M)+|\log(\eps)|)^\gk)}$ parameters, 
          such that
          \begin{equation*}
            \norm{g-\widetilde g}_{L^2( D_M, \mu_M)}\le L_g 2r w_1^s \eps.
          \end{equation*}
	\end{lem}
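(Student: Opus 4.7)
The plan is a standard rescaling argument: we transport the problem from the anisotropic box $D_M$ to the unit cube $[0,1]^M$, apply Assumption~\ref{ass:surrogate} there, and push the resulting surrogate back to $D_M$ while tracking the Lipschitz constant, the measure transformation, and the parameter count.

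First, I would introduce the affine bijection $T:[0,1]^M \to D_M$ defined componentwise by $T(u)_i := r w_i^s(2 u_i - 1)$. A direct computation shows that $T$ pushes the uniform probability measure $\lambda^M$ on $[0,1]^M$ forward to $\mu_M$, since $\mu_M$ is precisely the normalized Lebesgue measure on the product box $D_M$ and $T$ has constant Jacobian $\prod_{i=1}^M (2 r w_i^s)$, which is exactly the volume of $D_M$. Setting $f := g \circ T : [0,1]^M \to \bR$, the estimate
$$
\|T(u) - T(u')\|_2^2 = \sum_{i=1}^M (2 r w_i^s)^2 (u_i - u'_i)^2 \le (2 r w_1^s)^2 \|u - u'\|_2^2
$$
(using that $\bw$ is non-increasing and $w_1 \ge w_i$ for all $i$) together with the Lipschitz continuity of $g$ yields
$$
|f(u) - f(u')| \le L_g \|T(u) - T(u')\|_2 \le L_g \, 2 r w_1^s \, \|u - u'\|_2.
$$
Hence $f$ is Lipschitz on $[0,1]^M$ with constant $L_f \le L_g \, 2 r w_1^s$.

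Next, I apply Assumption~\ref{ass:surrogate} to $f$: for any $\eps \in (0,1]$ there is a neural network surrogate $\widetilde f : [0,1]^M \to \bR$ with at most $\cO(M^\ga \eps^{-\gb}(1+\log(M)+|\log(\eps)|)^\gk)$ parameters such that $\|f - \widetilde f\|_{L^2([0,1]^M)} \le L_f \eps \le L_g \, 2 r w_1^s \, \eps$. I then define $\widetilde g := \widetilde f \circ T^{-1}$. Since $T^{-1}$ is a componentwise affine map $x_i \mapsto (x_i + r w_i^s)/(2 r w_i^s)$, it can be absorbed into the input-to-first-hidden-layer weights and biases of $\widetilde f$, producing a neural network $\widetilde g$ of the same architecture with the same parameter count up to an absolute constant (the absorption is compatible with each of the architectures listed in Table~\ref{table:NN-architectures}, as the input layer is always affine).

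Finally, pushing the $L^2$ error back to $D_M$ via the change of variables $x = T(u)$ and using $T_* \lambda^M = \mu_M$ gives
$$
\int_{D_M} |g(x) - \widetilde g(x)|^2 \, \mu_M(dx) = \int_{[0,1]^M} |f(u) - \widetilde f(u)|^2 \, du \le (L_g \, 2 r w_1^s \, \eps)^2,
$$
from which the claim follows. There is no substantive obstacle beyond the rescaling bookkeeping; the only point requiring care is verifying that the affine pre-composition $T^{-1}$ does not inflate the parameter count, which is immediate for the architectures we consider.
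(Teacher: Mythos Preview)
Your proof is correct and follows essentially the same approach as the paper: the affine rescaling $T:[0,1]^M\to D_M$, the Lipschitz constant bound $L_g 2rw_1^s$ via the monotonicity of $\bw$, application of Assumption~\ref{ass:surrogate} on the unit cube, and the pushforward identity $T_\#\lambda^M=\mu_M$ to transfer the $L^2$ error. The only cosmetic difference is that the paper counts $M$ extra parameters for $T^{-1}$ and absorbs them via $\alpha\ge 1$, whereas you absorb $T^{-1}$ directly into the input affine layer; both give the same asymptotic bound.
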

	\begin{proof}
          We first translate the unit cube $[0,1]^M$ to $ D_M$
          with the linear bijection
          \begin{equation*}
            T:[0,1]^M\to D_M , \quad 
            (u_1, \cdots, u_M)\mapsto r\left((2u_1-1)w_1^s, \cdots, (2
              u_M
              -1)
              w_M^s
            \right). 
          \end{equation*}
          Set $g_T:=g\circ T:[0,1]^M\to \bR$. Clearly, 
          for all $u$, $u'\in[0,1]^M$ 
          \begin{align*}
            \abs{g_T(u)-g_T(u')}\le L_g \|T(u)-
            T(u')
            \|_2
            = L_g 2 r \left(\sum_{i=1}^M\abs{u_i-u'_i}^2w_i^{2s}\right)^{1/2}
            \le L_g 2 r w_1^s \|u-u'\|_2.
          \end{align*}
          By Assumption~\ref{ass:surrogate}, for any
          $\eps\in(0,1]$ and for any finite $M\in \bN$ 
          exists a neural network $\widetilde g_T:[0,1]^M\to \bR$ 
          with at most $C (M^\ga\eps^{-\gb}(1+\log(M)+|\log(\eps)|)^\gk)$ 
          parameters, where the constant $C>0$ 
          is independent of $\eps$ and $M$, 
          and
          \begin{equation*}
            \|g_T-\widetilde g_T\|_{L^2([0,1]^M)}\le L_g 2r w_1^s \eps.
          \end{equation*} 
          Denote by $\gl^M$ 
          the Lebesgue measure on $([0,1]^M, \cB([0,1]^M))$.  
          For the pushforward measure 
          $T_\#\gl^M:= \gl^M\circ T^{-1}$ on $( D_M, \cB( D_M))$ and 
          $\widetilde g:=\widetilde g_T\circ T^{-1}$ there holds
          \begin{equation*}
            \|g_T-\widetilde g_T\|_{L^2([0,1]^M)}
            =
            \|g\circ T-\widetilde g\circ T\|_{L^2([0,1]^M)}
            =
            \|g-\widetilde g\|_{L^2( D_M, T_\#\gl^M)}.
          \end{equation*}
          Since $T_\#\gl^M=\bigotimes_{i=1}^M
          \frac{\gl}{2rw_i^s}=\mu_M$ it thus follows that
          \begin{equation*}
            \|g-\widetilde g\|_{L^2( D_M, \mu_M)}
            \le L_g 2r w_1^s \eps.
          \end{equation*}
          The linear transformation $T^{-1}$ introduces
          $M$ additional parameters, regardless of
          $\eps$. 
          But since $\eps\le 1$ and $\ga\ge1$, 
          it holds that $\widetilde g$ 
          has at most $\cO(M^\ga\eps^{-\gb}(1+\log(M)+|\log(\eps)|)^\gk)$  
          parameters.
	\end{proof}
	
	To bound the mean-squared error of the overall approximation,
        we introduce the uniform product probability measure
        $\cP_U:=\bigotimes_{i\in\bN}
        \frac{\gl}{2}$ on $U:=[-1,1]^\bN$, where
        $\gl$ is the Lebesgue measure on $(\bR, \cB(\bR))$ and
        $U$ is equipped with the product Borel
        $\gs$-algebra $\cB([-1,1]^\bN)$.
	We further define the random variable $\gs_r^s:U\to \cX$ 
        on $(U, \cB([-1,1]^\bN), \cP_U)$ 
        via
	\begin{equation*}
          \gs_r^s:U\to \cX, \quad \mfu \mapsto r\sum_{i\in\bN} w_i^s \mfu_i\psi_i,
	\end{equation*}
	and note that $C_r^s(\cX)\subseteq
        \gs_r^s[U]$.  On the other hand,
        $\cE_\cX(\gs_r^s({\mfu}))=(rw_i^s{\mfu}_i, i\in\bN)\in
        \bigtimes_{i\in\bN}[-rw_i^s, rw_i^s]$ since
        $\mathbf\Psi_\cX$ is a Riesz basis, and thus $C_r^s(\cX)=
        \gs_r^s[U]$.  Hence, the pushforward measure
        $(\gs_r^s)_\#\cP_U$ is supported on $C_r^s(\cX)$.
	Our main result gives a bound on $\eps$-complexity of expression 
    for Lipschitz maps, in terms of the number $ \cN_{para}$ of neurons
    which are sufficient for $\eps$-consistency of the DON.
	\begin{thm}\label{thm:MainRes}
		Let Assumptions~\ref{ass:lipschitz}
		and~\ref{ass:surrogate} hold and fix $\delta>0$
		(arbitrarily small).
		Then, for any $\eps \in (0,1]$ exists a neural
		network $\widetilde G:\ell^2(\bN)\to\ell^2(\bN)$ with at
		most $\cN_{para}\in\bN$ parameters such that
		\begin{equation*}
			\|\cG-\cD_\cY\circ \widetilde G\circ \cE_\cX\|_{L^2(C_r^s(\cX), (\gs_r^s)_\#\cP_U; \cY)} 
			\le \eps,
		\end{equation*}
		and , for some constant $C>0$ independent of $\eps$
		\begin{equation}\label{eq:dof_estimate}
			\cN_{para}\le 
			C
			\begin{cases}
				\eps^{-\ga/(s-1/2)-(2+\gb)/2t-\delta}(1+\abs{\log(\eps)})^\gk ,
				&\quad t\le \frac{1}{2}, \\
				{\eps^{-\ga/(s-1/2)-1/t-\gb-\delta}(1+\abs{\log(\eps)})^\gk},
				&\quad t> \frac{1}{2},
			\end{cases}
		\end{equation}
		where $C=C(\delta, L_\cG)>0$ is independent of $\eps$.
	\end{thm}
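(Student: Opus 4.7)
The plan is to decompose the total approximation error via triangle inequality using the intermediate surrogates built in Sections~\ref{sec:output_trunc} and~\ref{sec:input_trunc}:
\begin{equation*}
\cG - \cD_\cY\circ\widetilde G \circ \cE_\cX
= (\cG - \cG_N) + (\cG_N - \cG_N^{\mathbf M}) + (\cG_N^{\mathbf M} - \cD_\cY\circ\widetilde G\circ\cE_\cX),
\end{equation*}
bound each piece separately in $L^2(C_r^s(\cX),(\gs_r^s)_\#\cP_U;\cY)$, and then choose the truncation parameters $N$, $\mathbf M=(M_1,\dots,M_N)$ and the component-wise NN tolerances $\eps_j$ so that each of the three contributions is $\lesssim \eps$, while the total number of parameters is as small as possible.

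For the first two pieces, I would use the uniform bounds already established: Proposition~\ref{prop:output_trunc} gives the output truncation error $\lesssim w_{N+1}^t$, and Proposition~\ref{prop:input_trunc} gives the input truncation error $\lesssim w_M^{s-1/2-\delta}$ in the case $t\le 1/2$ (choosing $M_j\equiv M$), or $\lesssim \max_j w_j^{t-1/2-\delta} w_{M_j+1}^{s-1/2-\delta}$ in the case $t>1/2$ (choosing $M_j$ decreasing in $j$ as in Remark~\ref{rmk:TrncChoice}). Since both bounds are uniform on $C_r^s(\cX)$, they automatically control the $L^2$ norm with respect to $(\gs_r^s)_\#\cP_U$. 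For the third piece I would set $\widetilde G:=(\widetilde g_1,\dots,\widetilde g_N,0,0,\dots)$ with $\widetilde g_j := \widetilde{\mfg}_j \circ \Pi_{M_j}$, where $\widetilde{\mfg}_j$ is the NN surrogate for $\mfg_j$ from Lemma~\ref{lem:lipschitz_approx} (applicable since $\mfg_j$ is Lipschitz on $D_{M_j}$ by Lemma~\ref{lem:g_Lipschitz} with constant $L_\cG w_j^t\sqrt{\Lambda_\cX}$). Key observation: the pushforward $(\gs_r^s)_\#\cP_U$ projects onto the first $M_j$ coordinates exactly as the uniform product measure $\mu_{M_j}$, so after using the Riesz inequality $\|\cdot\|_\cY^2\le\Lambda_\cY\sum_j|\cdot|^2$ the integral decouples and
\begin{equation*}
\|\cG_N^{\mathbf M}-\cD_\cY\circ\widetilde G\circ\cE_\cX\|_{L^2(\ldots;\cY)}^2
\;\lesssim\; \sum_{j=1}^N w_j^{2t}\eps_j^2.
\end{equation*}

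With these bounds in hand, the plan is to equilibrate all three contributions. Using $\bw\in\ell^{1+\eps'}$ for every $\eps'>0$, I can translate the weight decay into algebraic decay $w_i\lesssim i^{-1+\delta'}$ up to an arbitrarily small loss $\delta'$. Fixing the output truncation by $N\asymp \eps^{-1/t-\delta}$ and picking $M_j$ according to Remark~\ref{rmk:TrncChoice} (either a common $M\asymp \eps^{-1/(s-1/2)-\delta}$ when $t\le 1/2$, or $M_j\asymp \eps^{-1/(s-1/2)} j^{-(t-1/2)/(s-1/2)}$ up to $\delta$-losses when $t>1/2$), and finally choosing $\eps_j$ so that $w_j^t\eps_j$ is balanced (either uniformly, or matched to the input-truncation budget), I would plug back into the cost $\sum_{j=1}^N M_j^\alpha \eps_j^{-\beta}(1+\log M_j+|\log\eps_j|)^\kappa$ from Assumption~\ref{ass:surrogate}. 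Summing and taking $\delta'$ small absorbs all logarithmic/summation losses into an arbitrarily small $\delta$ in the final rate, yielding the two-case bound in \eqref{eq:dof_estimate}.

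The main obstacle is the careful bookkeeping in the case $t>1/2$, where $M_j$ depends on $j$ and the component-wise tolerances $\eps_j$ must be chosen anisotropically; one must verify that $\sum_j M_j^\alpha\eps_j^{-\beta}$ converges with the desired scaling despite the shrinking factor from $w_j^t$, and that the log factors $(1+\log M_j+|\log \eps_j|)^\kappa$ can be pulled out of the sum. A secondary subtlety is ensuring that the pushforward measure correctly reduces to the product uniform measure on the $M_j$-dimensional cube (so that Lemma~\ref{lem:lipschitz_approx} applies without modification), which follows from the product structure of $\cP_U$ and the fact that $\gs_r^s$ acts coordinatewise on the Riesz coefficients. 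Once these technicalities are handled, the announced parameter bound drops out by the standard reservoir argument $\eps\mapsto\eps/3$ and renaming $\delta$.
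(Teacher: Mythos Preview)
Your proposal is correct and follows essentially the same route as the paper: the same three-term decomposition, the same truncation estimates (Propositions~\ref{prop:output_trunc} and~\ref{prop:input_trunc}), the same component-wise NN construction via Lemmas~\ref{lem:g_Lipschitz} and~\ref{lem:lipschitz_approx}, the same pushforward-to-$\mu_{M_j}$ reduction, and the same case split on $t\lessgtr 1/2$ with the parameter choices indicated in Remark~\ref{rmk:TrncChoice}. The only cosmetic differences are that the paper splits the error budget as $\eps/4+\eps/4+\eps/2$ rather than $\eps/3$, and carries explicit choices for $N$, $M_j$, $\eps_j$ (your anisotropic $\eps_j\sim w_j^{-(t-1/2-\delta_0)}\eps$ is exactly what is used) through the final summation rather than leaving them implicit.
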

	
	\begin{proof}
		We first prove the claim in case that $t\le \frac{1}{2}$.
		
		Fix $\eps\in(0,1]$ and 
		recall from ~\eqref{eq:output_trunc}
		that
		$\cG_N^{\mathbf M}=\sum_{j=1}^N g_j(\cE_\cX(\cdot))\eta_j$
		for any $N$ and $\mathbf{M}\in\bN^N$.  We construct
		$\widetilde\cG$ by first choosing appropriate truncation
		indices $N$ and $\mathbf{M}\in\bN^N$ to obtain
		$\cG_N^{\mathbf M}$ such that
		$\sup_{x\in C_r^s(\cX)}\|\cG(x)-\cG_N^{\mathbf M}(x)\|_\cY
		\le\frac{\eps}{2}$.  Then we substitute each map $g_j$ in
		$\cG_N^{\mathbf M}$ by an appropriate neural network
		surrogate to achieve an overall error of at most $\eps$.
		
		1.) \emph{Input and output truncation:}
		Propositions~\ref{prop:input_trunc}
		and~\ref{prop:output_trunc} show that for any
		$\gd_0\in(0,s-\frac{1}{2})$ there exists
		$C_{L_\cG,\gd_0}>0$ such that for
		$\mathbf M=(M, \dots, M)$ where 
		$M\in\bN$ and $N\in\bN$ there holds
		\begin{equation}\label{eq:comb_trunc_err2}
			\sup_{x\in C_r^s(\cX)}\|\cG(x)-\cG_N^{\mathbf M}(x)\|_\cY 
			\le C_{L_\cG,\gd_0}\left(w_{N+1}^{t} + w_{M+1}^{s-\frac{1}{2}-\gd_0}\right).
		\end{equation}
		As the weight sequence $\bw\in (0,1]^\N$ is non-increasing
		with $\bw\in\ell^{1+\widehat\eps}(\N)$ for any
		$\widehat\eps>0$, there exists for any $\eps_0>0$ a $C_0\ge1$ such that
		$w_i\le C_0 i^{-1+\eps_0}$ for all $i\in\bN$, and thus,
		$C_{L_\cG,\gd_0} w_{N+1}^t\le C_{L_\cG,\gd_0} C_0^tN^{-t(1-\eps_0)}$.  
		Hence, we may fix some (arbitrary small) 
		$\gd_{0}\in(0,\frac{s}{2}-\frac{1}{4})$ and $\eps_0\in(0,\frac{\gd_0}{s-1/2-\gd_0})$,
		and let
		\begin{equation}\label{eq:N}
			N:=\ceil{\left(\frac{\eps}{4C_{L_\cG,\gd_0}C_0^t}\right)^{-\frac{1}{t(1-\eps_0)}}}\in\bN.
		\end{equation}
                Note that by the choice of $\gd_0$, $\eps_0$ there
                  holds $\eps_0<1$ and
                  $(s-\frac{1}{2}-\gd_0)(1-\eps_0)>s-\frac{1}{2}-2\gd_0>0$.
                Since $t\le \frac{1}{2}$, we thus set
                $\mathbf M=(M,\dots, M)\in\bN^N$ with
		\begin{equation}\label{eq:M}
			M:=\ceil{\left(\frac{\eps}{4C_{L_\cG,\gd_{0}} 
					C_0^{s-\frac{1}{2}-\gd_0}
				}\right)^{-\frac{1}{s-1/2-2\gd_{0}}}}
			\ge 
			\ceil{\left(\frac{\eps}{4C_{L_\cG,\gd_{0}} C_0^{s-\frac{1}{2}-\gd_0} }\right)^{-\frac{1}{(s-1/2-\gd_{0})(1-\eps_0)}}}.
		\end{equation}
		Combing the choices of $N$ and $\mathbf M=(M,\dots, M)$
		with~\eqref{eq:comb_trunc_err2} and $w_i\le C_0 i^{-1+\eps_0}$ then shows
		\begin{equation}\label{eq:eps-half-estimate}
			\sup_{x\in C_r^s(\cX)}\|\cG(x)-\cG_N^{\mathbf M}(x)\|_\cY 
			\le \frac{\eps}{4}+\frac{\eps}{4}=\frac{\eps}{2}.
		\end{equation}
		
		2.) \emph{Neural network surrogates for $g_j$:} It holds
		by~\eqref{eq:g_j_relation} that
		$\cG_N^{\mathbf M}=\sum_{j=1}^N
		g_j(\cE_\cX(\cdot))\eta_j=\sum_{j=1}^N
		\mfg_j(\Pi_{M_j}\circ\cE_\cX(\cdot))\eta_j$.
		Furthermore, Lemma~\ref{lem:g_Lipschitz} shows that each
		$\mfg_j:\bR^{M_j}\to \bR$ is Lipschitz continuous on the
		subset
		$D_{M_j}:=\bigtimes_{i=1}^{M_j}[-rw_i^s, rw_i^s]\subset
		\bR^{M_j}$ with Lipschitz constant given by
		$L_\cG w_j^t \sqrt{\gL_\cY}$.
		Lemma~\ref{lem:lipschitz_approx} then in turn shows that for
		any fixed $j=1,\dots, N$ and for any $\eps_j\in(0,1]$ 
		there
		exists an approximation $\widetilde{\mfg}_j:D_{M_j}\to \bR$
		of $\mfg_j$, such that
		\begin{equation}\label{eq:NN_estimate}
			\|\mfg_j-\widetilde{\mfg}_j\|_{L^2(\cD_{M_j}, \mu_{M_j})}
			\le 
			L_\cG w_j^t \sqrt{\gL_\cY} 2r w_1^s \eps_j,
		\end{equation}
		where
		$\mu_{M_j}:=\bigotimes_{i=1}^{M_j} \frac{\gl}{2rw_i^s}$
		denotes the uniform probability measure on $D_{M_j}$.
		Furthermore, the DNN $\widetilde{\mfg}_j$ uses at most
		$\cO(M_j^\ga\eps_j^{-\gb}(1+\log(M_j)+|\log(\eps_j)|)^{\gk})$
		parameters.
		
		With this at hand we define the neural network surrogate
		\begin{equation}
			\widetilde\cG:\cX\to \cY, \quad x\mapsto \sum_{j=1}^N \widetilde{\mfg}_j(\Pi_{M_j}\cE_\cX(x))\eta_j.
		\end{equation}
		To bound the error in this surrogate, we observe that
		for any $x\in C_r^s(\cX)$ it holds that
		\begin{align*}
			\norm{\cG_N^{\mathbf M}(x)-\widetilde\cG(x)}_\cY^2
			&= \norm{\sum_{j=1}^N \left(\mfg_j(\Pi_{M_j}\cE_\cX(x)) - \widetilde{\mfg}_j(\Pi_{M_j}\cE_\cX(x))\right)\eta_j}_\cY^2 \\
			&\le \gL_\cY \sum_{j=1}^N 
			\abs{\mfg_j(\Pi_{M_j}\cE_\cX(x)) - \widetilde{\mfg}_j(\Pi_{M_j}\cE_\cX(x))}^2.
		\end{align*}
		Furthermore, we have
		$\Pi_{M_j}\cE_\cX(x)= \Pi_{M_j}\cE_\cX(\gs_r^s(\mfu))$
		for some (in general non-unique) $\mfu \in U$. 
		In addition, if ${\mfu}\sim\cP_U$ 
		then
		$\Pi_{M_j}\cE_\cX(\gs_r^s({\mfu}))=r(w_1^s \mfu_1, \dots,
		w_{M_j}^s \mfu_{M_j})\sim \bigotimes_{i=1}^{M_j}
		\frac{\gl}{2rw_i^s}=\mu_{M_j}$.
		Therefore,~\eqref{eq:NN_estimate} yields with
		$C_{L_\cG}:=L_\cG \gL_\cY^{\frac{3}{2}}2rw_1^s$ 
		that
		\begin{align*}
			\norm{\cG_N^{\mathbf M}-\widetilde\cG}_{L^2(C_r^s(\cX), (\gs_r^s)_\#\cP_U; \cY)}^2
			\le \gL_\cY \sum_{j=1}^N 
			\norm{\mfg_j - \widetilde{\mfg}_j}_{L^2(D_{M_j}, \mu_{M_j})}^2
			\le C_{L_\cG}^2\sum_{j=1}^N w_j^{2t}\eps_j^2.
		\end{align*}
		We then obtain
		\begin{align*}
			\norm{\cG_N^{\mathbf M}-\widetilde\cG}_{L^2(C_r^s(\cX), (\gs_r^s)_\#\cP_U; \cY)}^2
			&\le C_{L_\cG}^2\left(\max_{j=1,\dots,N}  
			w_j^{2(t-\frac{1}{2}-\gd_{ 0})}\eps_j^2\right) \sum_{j=1}^N w_j^{1+2\gd_{0}} 
			\\
			&\le C_{L_\cG}^2C_{0}^{1+2\gd_{0}}\zeta(1+2\gd_{0})
			\left(\max_{j=1,\dots,N}  w_j^{2(t-\frac{1}{2}-\gd_{0})}\eps_j^2\right),
		\end{align*}
		where $\zeta:(1,\infty)\to(0,\infty)$ denotes the Riemann zeta function.
		Now let
		$C_{\gd_0} := C_{0}^{\frac{1}{2}+\gd_{0}}\zeta(1+2\gd_{0})^\frac{1}{2}$ and
		\begin{equation}\label{eq:epsilon_j}
			\eps_j:=\min\left(
			\frac{w_j^{-(t-\frac{1}{2}-\gd_{0})}\eps}
			{2C_{L_\cG} C_{\gd_0}}
			,\, 1\right)\in(0,1]
			, \quad j=1,\dots, N.
		\end{equation}
		The triangle inequality and~\eqref{eq:eps-half-estimate}
		then show
		\begin{align*}
			\norm{\cG-\widetilde\cG}_{L^2(C_r^s(\cX), (\gs_r^s)_\#\cP_U; \cY)}
			&\le
			\sup_{x\in C_r^s(\cX)}\|\cG(x)-\cG_N^{\mathbf M}(x)\|_\cY 
			+
			\norm{\cG_N^{\mathbf M}-\widetilde\cG}_{L^2(C_r^s(\cX), (\gs_r^s)_\#\cP_U; \cY)} \\
			&\le 
			\frac{\eps}{2}+\frac{\eps}{2}=\eps.
		\end{align*}
		
		3.) \emph{Overall complexity:} Let $\cN_{para}\in\bN$ denote
		the total number of parameters used to construct
		$\widetilde\cG$. We have in total used $N$ surrogates
		$\widetilde{\mfg}_j:D_{M_j}\to \bR$ with prescribed
		accuracies $\eps_j\in (0,1]$ for $j=1,\dots, N$.  
		Each of these
		surrogates therefore involves (at most)
		$\cO(M_j^\ga\eps_j^{-\gb}(1+\log(M_j)+|\log(\eps_j)|)^{\gk})$ 
		parameters 
		by Lemma~\ref{lem:lipschitz_approx}.
		Hence, 
		\begin{equation} \label{eq:Cdofs} 
			\cN_{para}\le C_{dofs}\sum_{j=1}^N M_j^\ga\eps_j^{-\gb}(1+\log(M_j)+|\log(\eps_j)|)^{\gk},
		\end{equation}
		where the constant $C_{dofs}>0$ is independent of $\eps_j, N$ and $M_j$.
		
		For fixed $\gd_0\in(0,s-\frac{1}{2})$ 
		independent of $\eps$, 
		we recall that $M_j=M$ for all $j$, where $M$ is given in~\eqref{eq:M}.  
		Substituting the
		choices of $M$ and $\eps_j$ in~\eqref{eq:M}
		and~\eqref{eq:epsilon_j}, respectively, yields
		\begin{align*}
			\cN_{para}
			\le C_{dofs} \sum_{j=1}^{N}& 
			\ceil{\left(\frac{\eps}{4C_{L_\cG,\gd_0}
					C_0^{s-\frac{1}{2}-\gd_0}
				}\right)^{-\frac{1}{s-1/2-2\gd_0}} \frac{1}{C_0}}^\ga
			\min \left(\frac{w_j^{-(t-\frac{1}{2}-\gd_0)}\eps}{2C_{L_\cG} C_{\gd_0}}, \, 1\right)^{-\gb}\\
			&\;\cdot\left(1 + 
			\log\left(\ceil{\left(\frac{\eps}{4C_{L_\cG,\gd_0}
					C_0^{s-\frac{1}{2}-\gd_0}
				}\right)^{-\frac{1}{s-1/2-2\gd_0}} 	\frac{1}{C_0}}\right)
			+
			\abs{\log\left(\frac{w_j^{-(t-\frac{1}{2}-\gd_0)}\eps}{2C_{L_\cG}  C_{\gd_0}}\right)}
			\right)^\gk , 
		\end{align*}
		where only the constants $C_{\gd_0}$ and
		$C_{L_\cG,\gd_0}$ depend on $\gd_0$.
		
		Further, recall that the constants $C_{L_\cG}$ and that 
		$C_{L_\cG,\gd_0}$ grow linearly with respect to $L_\cG$.
		Hence, $t-\frac{1}{2}-\gd_0<0$ and $N$ as
		in~\eqref{eq:N} show that for any $\eps_0\in(0,1)$ there holds
		\begin{align*}
			\cN_{para}
			&\le C \sum_{j=1}^{N} \eps^{-\frac{\ga}{s-1/2-2\gd_0}-\gb}w_j^{\gb(t-\frac{1}{2}-\gd_0)} 
			(1+\abs{\log(\eps)}+\abs{\log(\eps w_j)}|)^\gk	
			\\
			&\le C 
			N \eps^{-\frac{\ga}{s-1/2-2\gd_0}-\gb}
			\max_{j=1\,\dots, N}
			\left(w_j^{\gb(t-\frac{1}{2}-\gd_0)}\right) 
			(1+\abs{\log(\eps)}+\abs{\log(w_j)}|)^\gk \\
			&\le C  \eps^{-\frac{\ga}{s-1/2-2\gd_0}-\gb}         
			N^{1-\gb(t-\frac{1}{2}-\gd_0)(1-\eps_0)}
			(1+\abs{\log(\eps)}+\abs{\log(N)}|)^\gk\\
			&\le C  \eps^{-\frac{\ga}{s-1/2-2\gd_0}-\gb}        \eps^{(-1+\gb(t-\frac{1}{2}-\gd_0)(1-\eps_0))\frac{1}{t(1-\eps_0)} }
			(1+\abs{\log(\eps)})^\gk\\
			&\le C \eps^{-\frac{\ga}{s-1/2}-\frac{(2+\gb)}{2t}-\gd}
			(1+\abs{\log(\eps)})^\gk,
		\end{align*}
		where $C=C(\gd_0, \eps_0, L_\cG)$ is independent of $\eps$, and $\gd$ is given by
		$$\gd=\frac{\ga}{s-1/2-2\gd_0} +\gb + \frac{1}{t(1-\eps_0)} - \frac{\gb(2t-1-2\gd_0)}{2t}
		-\frac{\ga}{s-1/2}-\frac{(2+\gb)}{2t}
		=O(\gd_0)\qquad\text{as }\gd_0\to 0.$$       
		The claim for
		$t\le\frac{1}{2}$ now follows for arbitrary $\gd>0$ by
		choosing $\gd_0$ and $\eps_0$ sufficiently small.
		
		4.) \emph{Case $t>\frac{1}{2}$:} We only highlight the
		changes that are necessary in the proof for
		$t\leq \frac{1}{2}$.
		
		4a) Observe that for arbitrary  
		$\gd_0\in(0,\frac{\min(s,t)}{2}-\frac{1}{4})$ and $\eps_0\in(0,\frac{\gd_0}{\max(s,t)-1/2-\gd_0})$ there holds
		\begin{equation*}
			\max_{j=1,\dots,N}  w_j^{t-\frac{1}{2}-\gd_0} w_{M_j+1}^{s-\frac{1}{2}-\gd_0}
			\le
			C_0^{t+s-1-2\gd_0}
				\max_{j=1,\dots,N} 
				j^{-t+\frac{1}{2}+2\gd_0} M_j^{-s+\frac{1}{2}+2\gd_0}.
		\end{equation*}
		Hence, we set 
		\begin{equation}\label{eq:M_j}
			M_j:=
			\ceil{\left(\frac{\eps}{4 C_{L_\cG,\gd_0}}j^{t-\frac{1}{2}-2\gd_0}
				C_0^{t+s-1-2\gd_0}
				\right)^{-\frac{1}{s-1/2-2\gd_0}} }, \quad j=1,\dots, N.
		\end{equation}
		We further observe that $M_1\simeq M$ with $M$ as in~\eqref{eq:M}, 
		and that $M_j$ is decreasing with increasing $j$, 
		since $\frac{t-1/2-2\gd_0}{s-1/2-2\gd_0}>0$ as $\gd_0<\frac{\min(s,t)}{2}-\frac{1}{4}$.
		Next, by
		Propositions~\ref{prop:input_trunc}
		and~\ref{prop:output_trunc} for any
		$\mathbf{M}\in\bN^N$ there holds
		\begin{equation}\label{eq:comb_trunc_err1}
			\sup_{x\in C_r^s(\cX)}\|\cG(x)-\cG_N^{\mathbf M}(x)\|_\cY 
			\le 
			C_{L_\cG,\gd_0}
			\left(w_{N+1}^{t} + \max_{j=1,\dots,N}  
			w_j^{t-\frac{1}{2}-\gd_0} w_{M_j+1}^{s-\frac{1}{2}-\gd_0}\right).
		\end{equation}
		With $\mathbf M=(M_1, \dots, M_N)$ in~\eqref{eq:M_j}
		it then follows 
		that 
		\begin{equation*}
			\sup_{x\in C_r^s(\cX)}\|\cG(x)-\cG_N^{\mathbf M}(x)\|_\cY 
			\le \frac{\eps}{4}+\frac{\eps}{4}=\frac{\eps}{2}.
		\end{equation*}
		
		4b) 
		Let $\gd_0\in(0,\frac{\min(s,t)}{2}-\frac{1}{4})$ and $\eps_0\in(0,\frac{\gd_0}{\max(s,t)-1/2-\gd_0})$
		and use $M_j$ as in~\eqref{eq:M_j} for fixed $\gd_0$. 
		Thus, there holds $\log(M_j)\le C(1+\abs{\log(\eps)}+\log(j))$ 
		and we obtain the estimate
		\begin{align*}
			\cN_{para}
			&\le C \eps^{-\frac{\ga}{s-1/2-2\gd_0}-\gb} 
			\sum_{j=1}^{N} j^{-\ga\frac{t-1/2-2\gd_0}{s-1/2-2\gd_0}} w_j^{\gb(t-\frac{1}{2}-\gd_0)} 
			(1+\abs{\log(\eps)} + \log(j))^\gk\\
			&\le C \eps^{-\frac{\ga}{s-1/2-2\gd_0}-\gb} 
			\sum_{j=1}^{N} j^{-(\frac{\ga}{s-1/2-2\gd_0}+\gb(1-\eps_0))(t-\frac{1}{2}-2\gd_0)} 
			(1+\abs{\log(\eps)} + \log(j))^\gk\ \\
			&\le C \eps^{-\frac{\ga}{s-1/2-2\gd_0}-\gb}  N
			\left(\max_{j=1,\dots, N} j^{-(\frac{\ga}{s-1/2-2\gd_0}+\gb(1-\eps_0))(t-\frac{1}{2}-2\gd_0)}  
			\right)
			(1+\abs{\log(\eps)} + \log(N))^\gk.
		\end{align*}
		As $2\gd_0<\min(s,t)-\frac{1}{2}$ and $\eps_0\in(0,1)$, 
		it holds that $(\frac{\ga}{s-1/2-2\gd_0} + 
		\gb(1-\eps_0))(t-\frac{1}{2}-2\gd_0)\ge 0$ 
		and thus with $N$ as in~\eqref{eq:N} it follows that  
		\begin{align*}
			\cN_{para}
			&\le C \eps^{-\frac{\ga}{s-1/2-2\gd_0}-\gb}  N 
			(1+\abs{\log(\eps)} + \log(N))^\gk \\
			&\le C \eps^{-\frac{\ga}{s-1/2-2\gd_0}-\gb-\frac{1}{t(1-\eps_0)}}
			(1+\abs{\log(\eps)})^\gk.
		\end{align*}
		The claim then follows for arbitrary small 
		$$\gd=\frac{\ga}{s-1/2-2\gd_0}  +
		\frac{1}{t(1-\eps_0)}-\frac{\ga}{s-1/2} - \frac{1}{t}>0$$
		with appropriate choices of $\gd_0$ and $\eps_0$, 
		since $\gd=\cO(\gd_0+\eps_0)=\cO(\gd_0)$ as $\gd_0\to 0$.
	\end{proof}
\section{Discussion and Extensions}
\label{sec:Extns}
%
We indicate several
directions in which our main result, Theorem~\ref{thm:MainRes}, can be
extended. First, due to the invariance of the (metric) space of
Lipschitz maps under composition, Theorem~\ref{thm:MainRes} can be
applied to finite compositions of Lipschitz maps in a
``component-wise'' manner, with different truncation dimensions, and
different encoders and decoders at each stage of composition.  This
allows, in particular, to leverage known expression rate bounds for
the factor maps.  We indicate the ideas and details in
Section~\ref{sec:Compos}.
\subsection{Compositions of Lipschitz Maps}
\label{sec:Compos}
The preceding operator emulation bound readily applies 
to \emph{composition of operators}: 
let $K\geq 2$ be an integer,
and 
assume given a collection $(\cX_k, k=0,\dots, K)$ of Banach spaces,
with norms $\norm{\cdot}_{\cX_k}$.
Further, denote for any $k=1,...,K$ by $\Lip(\cX_{k-1},\cX_k)$ 
the set of all Lipschitz continuous mappings from $\cX_{k-1}$ to $\cX_k$.
For nonlinear maps $\cG_k \in \Lip(\cX_{k-1},\cX_k)\;,\;\; k=1,...,K$ 
consider the composition
\begin{equation}\label{eq:cGK}
\cG_{[1:k]} = \cG_k \circ ... \circ \cG_1, \quad k=1,...,K\;.
\end{equation}
Setting $\cX := \cX_0$ and $\cY:= \cX_K$, 
evidently $\cG_{[1:K]} \in \Lip(\cX,\cY)$.
The following observation is elementary.
\begin{lem}\label{lem:LipComp}
Assume 
each of the ``factor'' operators $\cG_k$ constituting $\cG_{[1:k]}$ 
in \eqref{eq:cGK}
are Lipschitz with Lipschitz constant
$$
L_k
:= 
\sup_{x,\widetilde{x}\in \cX_{k-1}} 
\frac{\| \cG_k(x) - \cG_k(\widetilde{x}) \|_{\cX_k}}{\| x-\widetilde{x} \|_{\cX_{k-1}}}\;,
\;\; 
k=1,...,K\;.
$$

Then, the composition $\cG_{[1:k]}$ in \eqref{eq:cGK} is in $\Lip(\cX_0,\cX_k)$
with Lipschitz constant $L_{[1:k]}$ bounded by
\begin{equation}\label{eq:LipProd}
	L_{[1:k]} \leq \prod_{j=1}^k L_j \;.
\end{equation}
\end{lem}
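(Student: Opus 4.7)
The plan is to prove the bound by induction on $k$, relying only on the defining Lipschitz inequality of each factor map $\cG_j$. The base case $k=1$ is immediate from the hypothesis, since $\cG_{[1:1]} = \cG_1$ is Lipschitz from $\cX_0$ to $\cX_1$ with constant $L_1$.

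For the inductive step, assume that $\cG_{[1:k-1]} \in \Lip(\cX_0, \cX_{k-1})$ with Lipschitz constant at most $\prod_{j=1}^{k-1} L_j$. Fix arbitrary $x,\widetilde{x} \in \cX_0$ and apply the Lipschitz bound for $\cG_k$ at the pair of image points $\cG_{[1:k-1]}(x), \cG_{[1:k-1]}(\widetilde{x}) \in \cX_{k-1}$; the right-hand side of that inequality is then bounded using the induction hypothesis applied to $x,\widetilde{x}$. Chaining the two estimates produces the factor $L_k \cdot \prod_{j=1}^{k-1} L_j = \prod_{j=1}^{k} L_j$ times $\|x-\widetilde{x}\|_{\cX_0}$, so that taking the supremum over $x \ne \widetilde{x}$ yields \eqref{eq:LipProd}.

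Alternatively, one may bypass induction and telescope directly in a single line, by inserting intermediate points $\cG_{[1:j]}(x), \cG_{[1:j]}(\widetilde{x})$ for $j=1,\dots,K-1$ and applying each factor's Lipschitz bound in sequence. The argument is essentially textbook and there is no substantive obstacle; the only minor bookkeeping concern is to track the correct intermediate norm $\|\cdot\|_{\cX_j}$ at each stage of the composition, which follows automatically from the domain/codomain specification $\cG_j \in \Lip(\cX_{j-1},\cX_j)$.
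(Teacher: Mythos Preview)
Your proof is correct; the induction (or equivalently the one-line chaining of the factorwise Lipschitz estimates) is exactly the standard argument. The paper itself does not supply a proof of this lemma, merely flagging it as an elementary observation, so there is nothing further to compare.
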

Operators $\cG_{[1:K]}$ as in \eqref{eq:cGK}
are naturally covered by the main result, Theorem~\ref{thm:MainRes}. 
To apply Theorem~\ref{thm:MainRes} to $\cG_{[1:K]}$, it is in fact sufficient that only the last component $\cG_K$ satisfies 
    Assumption~\ref{ass:lipschitz}\,, in the sense that 
there exist $s > \frac{1}{2}$, $t$ and $r>0$ 
such that 
\begin{equation}\label{eq:lipschitz3}
	\|\cG_K(x)-\cG_K(x')\|_{\cX_K^{{t}}}\le L_k \|x-x'\|_{\cX_{K-1}}, 
	\quad x,x'\in C_{{r}}^{{s}}(\cX_{K-1}).
\end{equation}
For the remaining ``factor'' operators $\cG_k$ we only need to assume the weaker condition 
$\cG_k \in \Lip(\cX_{k-1},\cX_k)$ for $k=1,...,K-1$ with Lipschitz constants $L_k>0$.
Lemma~\ref{lem:LipComp} then implies for $\cY=\cX_K$ and $\cX=\cX_0$ the bound 
\begin{equation}\label{eq:lipschitzBd}
	\|\cG_{[1:K]}(x)-\cG_{[1:K]}(x')\|_{\cY^{{t}}}\le L_{[1:K]} \|x-x'\|_{\cX}, 
	\quad x,x'\in C_{{r}}^{s}(\cX).
\end{equation}

\subsection{H\"older Continuous Maps}
\label{sec:Hoelder}
The proposed methodology readily extends to the case of 
$\gamma$-Hölder continuous operators, 
with some $\gamma \in (0,1]$ 
(note that, unlike Lipschitz-continuous maps,
$\gamma$-H\"older-continuous maps are, for $\gamma \in (0,1)$, 
not closed under composition). 
We briefly indicate the modifications of our main result, Theorem~\ref{thm:MainRes},
to $\gamma$-Hölder continuous operators, and state the corresponding emulation rate bounds.

Suppose then that there is $\gg\in(0,1)$ and $L_\cG>0$
such that Assumption~\ref{ass:lipschitz} holds in the weaker form 
	\begin{equation*}
		\|\cG(x)-\cG(x')\|_{\cY^t}\le L_\cG \|x-x'\|^\gg_\cX, \quad x,x'\in C_r^s(\cX).
	\end{equation*}
	The corresponding growth bound from~\eqref{eq:lin_growth} 
        then translates to
	\begin{equation}\label{eq:hoelder_growth}
		\|\cG(x)\|_{\cY^t}\le C(1+\norm{x}_\cX^{\gg}),
		\quad \mbox{with}\quad  C:=\max(L_\cG, \norm{\cG(0)}_{\cY^t}).
	\end{equation}
This results in the same rate for the output truncation error 
from Proposition~\ref{prop:output_trunc}, 
where $\gg$ only enters the hidden constant. 
On the other hand, the input truncation error from Proposition~\ref{prop:input_trunc} 
now scales worse and is of order 
	\begin{equation*}
		\cO\left(\max_{j=1,\dots,N}  
                 w_j^{t-\frac{1}{2}-\gd} w_{M_j+1}^{\gg(s-\frac{1}{2}-\gd)}\right), 
                 \quad \text{and} \quad 
		\cO\left( w_{M+1}^{\gg(s-\frac{1}{2}-\gd)}\right),
	\end{equation*}
respectively.

In addition, the exponents $\ga,\ol \ga, \gb, \ol \gb$ in Assumption~\ref{ass:surrogate} 
have to be scaled by $\gg^{-1}\in(1,\infty)$ for the DNN emulation of Hölder continuous maps, 
as may be seen e.g.\ from \cite[Corollary 2.2]{shen2022neural} or \cite[Corollary 1.2]{shen2021neural}. 
In all,
the estimates on the number of parameters in Theorem~\ref{thm:MainRes} 
change in the case of Hölder-continuous operators to 
\begin{equation*}
	\cN_{para}\lesssim
		\begin{cases}
			\eps^{-\ga/\gg^2(s-1/2)-(2\gg+\gb)/2\gg t -\gd}	(1+\abs{\log(\eps)})^{\gk/\gg}
			&\quad\text{if $t\le \frac{1}{2}$, } \\
			\eps^{-\ga/\gg^2(s-1/2)-1/t-\gb/\gg -\gd}	(1+\abs{\log(\eps)})^{\gk/\gg}
			,
			&\quad\text{if $t> \frac{1}{2}$.}
		\end{cases}
\end{equation*}
Note that the scaling of $\gg^{-2}$ in first term of the exponent 
is due to the reduced rate for the input truncation error 
\emph{and} 
deteriorating DNN emulation rates for Hölder continuous functions.
The result \cite[Theorem~2.1]{shen2022neural} even allows to obtain
expression rate bounds when only a (possibly weak) bound on the 
modulus of continuity of the components $\mathfrak{g}_j$ of $G$
holds, as e.g. in the so-called Calder\'{o}n-Problem (where this modulus
is logarithmic, see \cite{KochRuland21} and the references there).

\section{Examples}
\label{sec:Examples}
We provide several examples to illustrate the scope of the
presently obtained expression rate bounds.  Naturally, 
all maps between function spaces which are holomorphic, as considered
in \cite{herrmann2022neural} are in particular (locally) Lipschitz
and are therefore covered by the present approximation rate bounds.
In \cite{herrmann2022neural}, under parametric holomorphy assumptions,
operator surrogates were constructed
which were based on strict ReLU feedforward NNs. The present
constructions are considerably more involved than those in \cite{herrmann2022neural},
so that examples from \cite{herrmann2022neural} are not illustrative.
We opt to discuss examples of Lipschitz operators that generically 
do not exhibit any regularity beyond Lipschitz (or H\"older). 

In Section~\ref{sec:VI}, 
we show that the proposed, abstract setting naturally accommodates a broad class of
  (non-linear) parametric, elliptic variational inequalities.  
In Section~\ref{sec:LipHS}, 
we study expression rates for certain Lipschitz maps on the space of Hilbert-Schmidt
operators between two real, separable Hilbert spaces $\cH_1$ and $\cH_2$.
	\subsection{Elliptic Variational Inequalities}
	\label{sec:VI}
       We consider parameter-to-solution maps of elliptic variational inequalities (EVIs), 
       that arise for example from obstacle problems (e.g.\ \cite{DuvautLions,Hlavacek}) 
       in mechanics, optimal stopping in financial modeling or 
       in optimal control of differential operators \cite{TFMKOptCtrl2022}.
       It is well-known that the dependence of solutions of EVIs in unilateral problems for 
       elliptic operators on, for example, 
       coefficient functions in the elliptic operator is Lipschitz, 
       but not better, even for smooth obstacles.
       See, e.g., \cite{doktor1980perturbations,GJBKR} and the references there. 
       Hence, within our presently developed, abstract framework, 
       coefficient-to-solution maps of EVIs can be emulated by Deep Operator Nets (DONs), 
       with the architecture as in~\eqref{eq:DONarch}.
       We remark that the ensuing DON emulation rate analysis 
       is based on (classical) results on the Lipschitz stability
       of solutions of EVIs under perturbations of the data, 
       as proved e.g.\ in \cite{doktor1980perturbations}.
       An alternative approach to DON expression rate bounds 
       is via ``unrolling'' known, iterative solution algorithms 
       e.g.\ with recurrent DNNs, see e.g.\ \cite{DeepSolOpPrxNet}.
\subsubsection{Abstract Setting}
\label{sec:AbsSet}
	Let $(\cH,\inp{\cdot}{\cdot}_\cH )$ be a separable Hilbert space, 
        let $\cH^*$ denote the dual space of $\cH$, 
        and let 
        $\dualpair{\cH^*}{\cH}{\cdot}{\cdot}:\cX^*\times \cX\to \bR$ 
        be the associated dual pairing. 
        Further, let $\cA_1,\cA_2:\cH\to\cH$ be (possibly nonlinear) operators, 
        $\cK_1, \cK_2\subset\cH$ and let $f_1,f_2\in\cH$.
	\begin{assumption}\label{ass:VI}
	It holds that $\cK_1,\cK_2\subset\cH$ are closed convex subsets and 
        there exist constants $\ell, L>0$ such that for $i=1,2$:  
		\begin{itemize}
			\item $\cA_i$ is strongly $\ell$-monotone, i.e.  
			\begin{equation}
				\inp{\cA_iu-\cA_iv}{u-v}_\cH\ge \ell \norm{u-v}_\cH^2\quad \text{for all $u,v\in\cH$.}
			\end{equation}	
			\item $\cA_i$ is $L$-Lipschitz continuous, i.e.
			\begin{equation}
				\norm{\cA_iu-\cA_iv}_\cH\le L \norm{u-v}_\cH\quad \text{for all $u,v\in\cH$.}
                              \end{equation}
                            \item For all $r>0$ exists $B(r) <\infty$ such that
			\begin{equation*}
				\sup_{\|u\|_\cH\le r} \norm{\cA_iu}_\cH\le B(r).
			\end{equation*}
		\end{itemize}
	\end{assumption}

	For ${i=1,2}$ and $\cK_i\neq\emptyset$ closed and convex, we define by 
	\begin{equation}
		P_i:\cH\to\cK_i,\quad u \mapsto \operatorname{argmin}_{v\in \cK_i}\|u-v\|_\cH
	\end{equation}
	the projection onto $\cK_i$.
	We consider the variational inequality to 
	\begin{equation}\label{eq:VI}
		\text{find $u_i\in\cK_i$ such that}\quad  
		\inp{\cA_iu_i}{v-u_i}_\cH \ge \inp{f_i}{v-u_i}_\cH\quad
		\text{for all $v\in K_i$.}
	\end{equation} 
	Under Assumption~\ref{ass:VI}, it is well-known that~\eqref{eq:VI} admits a unique solution $u_i\in \cK_i$. 
	In addition, 
        $u_i$ depends Lipschitz continuous 
        on
        $\cA_i$, $f_i$ and $\cK_i$ in the following sense. 
	\begin{thm}\cite[Theorem 2.1]{doktor1980perturbations}\label{thm:VI-perturb}
		Let Assumption~\ref{ass:VI} hold and define for any $r>0$
		\begin{align*}
			\mfA(r)&:=\sup_{\|u\|_\cH\le r} \norm{\cA_1u-\cA_2u}_\cH<\infty, 
                        \\
			\varrho(r)&:=\sup_{u\in\cH, \|u\|_\cH\le r} \norm{P_1u-P_2u}_\cH<\infty,\quad\text{and} 
                        \\
			\mfD&:=\max_{i=1,2} \inf_{\varphi\in\cK_i}\norm{\varphi}_\cH.
		\end{align*}
		Let $u_i$ be the unique solution to~\eqref{eq:VI} for ${i=1,2}$.
		Then, 
                there exist constants $R=R(\ell , L , \mfD) > 0$ 
                and $C=C(\ell , L) > 0$ such that 
                $\norm{u_i}_\cH\le R$ for $i=1,2$ 
                and
		\begin{equation}\label{eq:Vi-Lipschitz}
			\|u_1-u_2\|_\cH\le C \left(
			\varrho\left(R+B(R)+\max\left(\norm{f_1}_\cH, \norm{f_2}_\cH\right)\right) + 
			\|f_1-f_2\|_\cH + \mfA(R)\right).
		\end{equation}
	\end{thm}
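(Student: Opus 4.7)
The plan is to proceed in two stages: first establish an a priori bound $\|u_i\|_\cH\le R$, then deduce the Lipschitz-type estimate via a fixed-point reformulation of \eqref{eq:VI}. For the a priori bound, for each $i\in\{1,2\}$ I would pick $\varphi_i\in\cK_i$ with $\|\varphi_i\|_\cH\le\mfD+1$ (possible by definition of $\mfD$) and test \eqref{eq:VI} with $v=\varphi_i$. Rearranging the resulting $\inp{\cA_iu_i-f_i}{u_i-\varphi_i}_\cH\le 0$ and invoking strong monotonicity $\ell\|u_i\|_\cH^2\le\inp{\cA_iu_i-\cA_i 0}{u_i}_\cH$ together with the Lipschitz bound $\|\cA_iu_i\|_\cH\le L\|u_i\|_\cH+\|\cA_i 0\|_\cH$ produces a quadratic inequality in $\|u_i\|_\cH$, whose resolution delivers the desired $R$ (depending on $\ell,L,\mfD,\|\cA_i 0\|_\cH,\|f_i\|_\cH$).

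For the Lipschitz estimate, I would use that \eqref{eq:VI} is equivalent to the fixed-point equation $u_i=P_i(u_i-t(\cA_iu_i-f_i))$ for every $t>0$, which follows from the characterisation of $P_i$ as the metric projection onto the closed convex set $\cK_i$. Setting $z_i:=u_i-t(\cA_iu_i-f_i)$, one decomposes
\begin{equation*}
u_1-u_2 \;=\; \bigl[P_1(z_1)-P_2(z_1)\bigr] + \bigl[P_2(z_1)-P_2(z_2)\bigr].
\end{equation*}
The first bracket is at most $\varrho(\|z_1\|_\cH)$, and the bound $\|z_1\|_\cH\le R+t(B(R)+\|f_1\|_\cH)$ stays below $R+B(R)+\max_i\|f_i\|_\cH$ once $t\in(0,1]$. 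The second bracket is dominated by $\|z_1-z_2\|_\cH$ because $P_2$ is $1$-Lipschitz. Expanding
\begin{equation*}
z_1-z_2 \;=\; (u_1-u_2)-t(\cA_2u_1-\cA_2u_2)-t(\cA_1-\cA_2)u_1+t(f_1-f_2),
\end{equation*}
and using the elementary identity $\|(u_1-u_2)-t(\cA_2u_1-\cA_2u_2)\|_\cH^2\le(1-2t\ell+t^2L^2)\|u_1-u_2\|_\cH^2$ (from $\ell$-monotonicity and $L$-Lipschitzness of $\cA_2$), together with $\|(\cA_1-\cA_2)u_1\|_\cH\le\mfA(R)$ (via the a priori bound), one arrives at
\begin{equation*}
\|u_1-u_2\|_\cH \;\le\; \kappa(t)\|u_1-u_2\|_\cH + t\bigl(\mfA(R)+\|f_1-f_2\|_\cH\bigr) + \varrho\bigl(R+B(R)+\textstyle\max_i\|f_i\|_\cH\bigr),
\end{equation*}
with $\kappa(t):=\sqrt{1-2t\ell+t^2L^2}$.

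Choosing $t:=\min(1,\ell/L^2)$ makes $\kappa(t)<1$ (the well-known consequence $\ell\le L$ of coexisting strong monotonicity and Lipschitzness keeps the discriminant nonpositive), with $1-\kappa(t)$ depending only on $\ell,L$. Absorbing the $\kappa(t)\|u_1-u_2\|_\cH$ term to the left and dividing then yields \eqref{eq:Vi-Lipschitz} with a constant $C=C(\ell,L)$.

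The delicate point is the coupling of the two perturbation sources. To close a contraction argument one needs a single operator ($\cA_2$ above) evaluated at both $u_1$ and $u_2$, while the genuine operator-difference $(\cA_1-\cA_2)$ is only controlled uniformly on bounded sets via $\mfA(R)$; similarly the projection-difference $P_1-P_2$ must be evaluated via $\varrho$ at a point whose norm stays uniformly bounded in $t$. Both steps therefore rely essentially on the uniform a priori bound $\|u_i\|_\cH\le R$ produced in the first stage, and the restriction $t\in(0,1]$ is imposed precisely so that the argument of $\varrho$ matches the size stated in \eqref{eq:Vi-Lipschitz}.
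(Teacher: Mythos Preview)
The paper does not give its own proof of this theorem; it is simply quoted from \cite[Theorem 2.1]{doktor1980perturbations}. So there is nothing to compare against here.

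Your argument is the standard one and is correct. The fixed-point reformulation $u_i=P_i(u_i-t(\cA_iu_i-f_i))$, the splitting $P_1(z_1)-P_2(z_1)+P_2(z_1)-P_2(z_2)$, and the contraction estimate $\|(u_1-u_2)-t(\cA_2u_1-\cA_2u_2)\|_\cH^2\le(1-2t\ell+t^2L^2)\|u_1-u_2\|_\cH^2$ are exactly the ingredients used in the classical perturbation theory for variational inequalities. Your check that $t=\min(1,\ell/L^2)$ gives $\kappa(t)<1$ in both regimes is fine (in the case $\ell/L^2>1$ you have $\ell\le L<1$, whence $1-2\ell+L^2<1-\ell<1$).

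One small remark: your a priori bound $R$ depends on $\ell,L,\mfD,B(0),\max_i\|f_i\|_\cH$, not only on $\ell,L,\mfD$ as the theorem statement in the paper asserts. You are right and the paper's dependency list is incomplete---an elementary example ($\cH=\bR$, $\cK_i=\bR$, $\cA_i=\mathrm{id}$) shows $u_i=f_i$, so $R$ cannot be independent of $\|f_i\|_\cH$. This does not affect the subsequent application in Section~\ref{sec:EVI}, where $f$ is held fixed.
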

	\subsubsection{Elliptic Variational Inequalities on the Torus}
	\label{sec:EVI}
	%
        For $d\in\N$ denote the $d$-dimensional torus by $\bT^d\simeq[0,1)^d$.
        Let $\cH:=H^1(\bT^d)$ and consider the closed, convex sets $\cK_\phi\subset\cH$, 
        that are
        parameterized by \emph{obstacles} $\phi\in\cH$ via 
	\begin{equation*}
		\cK_\phi:=\{ \varphi\in\cH: \varphi\ge \phi  \}.
	\end{equation*}
	We fix a (possibly non-linear) operator $\cA:\cH\to\cH$ satisfying Assumption~\ref{ass:VI} 
       and a source term $f\in \cH$, and identify $f$ with an element $f^*\in\cH^*$ by the Riesz representation theorem.
	For the sake of brevity, we only consider varying obstacles $\phi\in\cH$ in the following. 
	Then, for any $\phi\in\cH$ and corresponding $\cK_\phi$, 
        there exists a unique solution $u_\phi\in\cK_\phi$ to the variational inequality
	\begin{equation}
		\inp{\cA u_\phi}{v-u_\phi}_\cH \ge \dualpair{\cH^*}{\cH}{f^*}{v-u_\phi}\quad
		\text{for all $v\in\cK_\phi$.}
	\end{equation} 
	We may thus define the \emph{obstacle-to-solution operator}
	\begin{equation}\label{eq:obs-to-sol-operator}
		\cG: \cH \to \cH, \quad  \phi \mapsto u_\phi.
	\end{equation} 
	Now let $\phi_1,\phi_2\in \cH$ and denote the respective projections by $P_i:\cH\to \cK_{\phi_i}$. 
	In view of Theorem~\ref{thm:VI-perturb}, we obtain that
	\begin{equation*}
		\begin{split}
			\norm{\cG(\phi_1)-\cG(\phi_2)}_{\cH}
			&\le C \varrho(R+B(R)+\norm{f^*}_{\cH^*}) \\
			&\le C \sup_{\|u\|_\cH\le R+B(R)+\norm{f^*}_{H^*}} 
			\norm{P_1u-P_2u}_\cH,
		\end{split}
	\end{equation*} 
	where $C>0$ is independent of $\phi_1,\phi_2$, 
        but with $R>0$ depending on $\mfD=\max_{i=1,2} \inf_{\varphi\in\cK_i}\norm{\varphi}_\cH$.
	
	Using that $P_1u=P_2(u-(\phi_1-\phi_2))+(\phi_1-\phi_2)\in\cK_{\phi_1}$, however, 
        shows that there exists a $L_\cG>0$, independent of $R$, 
        such that for all $\phi_1, \phi_2\in\cH$ 
        it holds  
	\begin{equation}
		\begin{split}\label{eq:VI-lipschitz}
			\norm{\cG(\phi_1)-\cG(\phi_2)}_{\cH}
			&\le C \sup_{\|u\|_\cH\le R+B(R)+\norm{f^*}_{H^*}} 
			\norm{P_2(u-(\phi_1-\phi_2))+(\phi_1-\phi_2)-P_2u }_\cH\\
			&\le C \sup_{\|u\|_\cH\le R+B(R)+\norm{f^*}_{H^*}} 
			\norm{P_2(u-(\phi_1-\phi_2))-P_2u}_\cH+\norm{\phi_1-\phi_2}_\cH\\
			&\le L_\cG \norm{\phi_1-\phi_2}_\cH.
		\end{split}
	\end{equation} 
	\subsubsection{Wavelet Encoding}
	\label{sec:wavelets}
	%
	Let $K_j:=\{k\in\bZ^d:\;0\le k_1,\dots,k_d<2^{j}\}\subset 2^{j}\bT^d$ for $j\in\bN_0$, 
       let $\cL_0:=\{0,1\}^d$ and $\cL_j:=\cL_0\setminus \{(0,\dots,0)\}$ for $j\in\bN$.
	By \cite[Proposition 1.34]{TriebelWavDom}, 
        there exists an $L^2(\bT^d)$-orthonormal basis
	\begin{equation}\label{eq:torusbasis2}
		\mathbf\Psi:=\left((\psi_{j,k}^l),\;(j,k,l)\in \cI_{\mathbf \Psi} \right),
		\quad
		\cI_{\mathbf \Psi}:=\{j\in\bN_0,\; k\in K_j,\;l\in\cL_j\},
	\end{equation}
	where the $\psi_{j,k}^l:\bT^d\to \bR$ 
        are constructed from scaled, translated and tensorized one-periodic wavelets,
        such that $\gl(\text{supp}(\psi_{j,k}^l)) = \cO(2^{-dj})$.
	The basis $\mathbf\Psi$ may be constructed from Daubechies wavelets with 
        $m\in\bN$ vanishing moments that have compactly supported, univariate scaling and wavelet functions. 
	By choosing $m$ sufficiently large, 
        we may ensure for given $k\in\bN$ that $\psi, \phi\in \rC^k(\bR)$, 
        and thus $\mathbf \Psi\subset \rC^k(\bR)$. 
        For instance, $\psi, \phi\in \rC^1(\bR)$ holds for 
        univariate so-called Daubechies wavelets with $m \ge 5$ vanishing moments, 
        see~\cite[Section 7.1]{daubechies1992ten}.
	
	For $\gg\in(0,k)$, $p\in[1,\infty]$ and $\varphi\in L^2(\bT^d)$ 
       recall from \cite{TriebelWavDom} the \emph{Besov norms} 
	\begin{equation}\label{eq:besovnorm}
		\|\varphi\|_{B_{p,p}^\gg(\bT^d)}:=
		\left(
		\sum_{(j,k,l)\in\cI_{\mathbf\Psi}} 2^{jp(\gg+\frac{d}{2}-\frac{d}{p})} |(\varphi , \psi_{j,k}^l)_{L^2(\bT^d)}|^p
		\right)^{1/p},
		\quad
		p\in[1,\infty),
	\end{equation}
	and, for $p=\infty$,
	\begin{equation}\label{eq:besovnorminf}
		\|\varphi\|_{B_{\infty,\infty}^\gg(\bT^d)}:=
		\sup_{(j,k,l)\in\cI_{\mathbf\Psi}} 2^{j(\gg+\frac{d}{2})} |(\varphi ,\psi_{j,k}^l)_{L^2(\bT^d)}|
		<\infty.
	\end{equation}
	According to \cite[Theorem 1.36]{TriebelWavDom}, 
        the corresponding one-periodic \emph{Besov spaces} on $\bT^d$ are then represented 
        via 
	\begin{equation}\label{eq:besovspace}
		B_{p,p}^\gg(\bT^d):=\set{\varphi\in L^2(\bT^d):\; \|\varphi\|_{B_{p,p}^\gg(\bT^d)}<\infty}.
	\end{equation}
	We recall that $B_{2,2}^\gg(\bT^d)=H^\gg(\bT^d)$. 
	Further, let $\cC^\tau(\bT^d)$ denote the Hölder-Zygmund space for a given exponent $\tau>0$. 
	There holds the embedding 
        $B_{p,p}^\gg(\bT^d)\hookrightarrow B_{\infty,\infty}^\tau(\bT^d) 
        =
        \cC^\tau(\bT^d)$ for $\gg-\frac{d}{p}>0$ and $\tau\in\left(0,\gg-\frac{d}{p}\right]$, 
        \emph{with embedding constant bounded by one}, see e.g.\ \cite[Chapter 2.1]{TriebelTOFS4}.
	
	Since $B_{2,2}^\gg(\bT^d)=H^\gg(\bT^d)$ is a Hilbert space, 
        for representations in Fourier- or Wavelet-bases 
        one may identify  $B_{2,2}^\gg(\bT^d)$ with certain smoothness spaces $\cX^s$ and $\cY^t$ 
        as in Section~\ref{sec:smoothness_scales}. 
        To relate the exponent $\gg$ with $s$ and $t$, 
        we derive an equivalent norm to~\eqref{eq:besovnorm} for $p=2$.
        It is based on a weight sequence $\bw=(w_i,i\in\bN)$ 
        with a single integer index as in Section~\ref{sec:smoothness_scales}.
	%
        As a first step, observe that $|K_j|= 2^{dj}$ and $|\cL_j|\le 2^d$ for all $j\in\bN_0$, 
        and denote by $(i_j,j\in\bN_0)$ an (arbitrary) collection of bijective mappings 
        satisfying
	\begin{align*}
		&i_0:K_0\times \cL_0\to \{0, \dots, 2^d-1\} \\
		&i_j:K_j\times \cL_j\to 
		\left\{\sum_{m=0}^{j-1} |K_m||\cL_m| + 1, \dots, \sum_{m=0}^j |K_m||\cL_m| \right\},\quad j\in\bN.
	\end{align*} 
	We may then re-label all wavelet indices $(j,k,l)$ by integers via the one-to-one mapping
	\begin{equation*}
		\mfI:\mathbf\cI_\Psi\to \bN_0,\quad (j,k,l)\mapsto i_j(k,l).
	\end{equation*}
	Note that 
	\begin{equation*}
		\sum_{m=0}^{j-1} |K_m||\cL_m|=1+(2^d-1)\sum_{m=0}^{j-1} d^{dm}=2^{dj}, \quad j\in\bN,
	\end{equation*}
	hence $\mfI(j,k,l)\in\{2^{dj}+1, \dots, 2^{d(j+1)}\}$ for any $(j,k,l)\in\mathbf\cI_\Psi$ with $j\ge1$.
	Thus, by letting $\psi_{\cI(j,k,l)}:=\psi_{j,k}^l$, there holds  
	\begin{equation}\label{eq:norm_equiv2}
		\begin{split}
			\|\varphi\|_{B_{p,p}^\gg(\bT^d)}^p
			&=
			\sum_{(j,k,l)\in\cI_{\mathbf\Psi}} 2^{jp(\gg+\frac{d}{2}-\frac{d}{p})} |(\varphi 	,\psi_{j,k}^l)_{L^2(\bT^d)}|^p\\
			&\le 
			\sum_{i\in\bN_0} i^{\frac{p\gg}{d}+\frac{p}{2}-1} |(\varphi ,\psi_i)_{L^2(\bT^d)}|^p \\
			&\le 2^d 
			\sum_{(j,k,l)\in\cI_{\mathbf\Psi}} 2^{jp(\gg+\frac{d}{2}-\frac{d}{p})} |(\varphi 	,\psi_{j,k}^l)_{L^2(\bT^d)}|^p\\
			&= 2^d \|\varphi\|_{B_{p,p}^\gg(\bT^d)}^p.
		\end{split}
	\end{equation}
	Now define the Hilbert space $(\cX, \inp{\cdot}{\cdot}_\cX)$ with the inner product 
	\begin{equation*}
		\inp{\varphi}{ \phi}_\cX
		:=\sum_{i\in\bN_0}\inp{\varphi}{i^{\gg/2d}\psi_i}_{L^2(\bT^d)}
		\inp{\phi}{i^{\gg/2d}\psi_i}_{L^2(\bT^d)},
		\quad \varphi, \phi\in L^2(\bT^d)
	\end{equation*}
	and by
	\begin{equation*}
		\cX:=\set{\varphi\in L^2(\bT^d):\; \norm{\varphi}_\cX:=\sqrt{\inp{\varphi}{\varphi}_\cX}<\infty}.
	\end{equation*}
	With $p=2$ in \eqref{eq:norm_equiv2} there holds  
        $\cX=H^\gg(\bT^d)$ with the norm equivalence
	\begin{align*}
		\norm{\varphi}_{H^\gg(\bT^d)}^2
		\le 
		\norm{\varphi}_{\cX}^2
		\le 
		2^d \norm{\varphi}_{H^\gg(\bT^d)}^2.
	\end{align*}
	\subsubsection{Neural Network Approximation Rates of parametric EVIs}
	\label{sec:VI-approx}
	Let $w_i:=i^{-1}, i\in\bN$.
        Then $\bw := (w_i,i\in\bN)\in \ell^{1+\eps_\bw}(\bN)$ for all $\eps_\bw>0$.
        Choose $\cX:=H^{s_0}(\bT^d)$ for some fixed $s_0\ge0$, 
        to obtain for any $s\ge 0$ and $p=2$ that
	\begin{align*}
		\cX^s&=\left\{ \varphi\in\cX:\; 
                    \|\varphi\|^2_{\cX^s}
                    := \sum_{i\in\bN} |(\varphi ,\psi_i)_\cX|^2w_i^{-2s} <\infty\right\}
                 \\
		&=\left\{ \varphi\in H^{s_0}(\bT^d):\; \|\varphi\|^2_{\cX^s}
                  := \sum_{i\in\bN} |(\varphi ,\psi_i)_{H^{s_0}(\bT^d)}|^2w_i^{-2s} <\infty\right\}
                 \\
		&=\left\{ \varphi\in L^2(\bT^d):\; \sum_{i\in\bN} i^{2(\frac{s_0}{d}+s)}|(\varphi ,\psi_i)_{L^2(\bT^d)}|^2 < \infty\right\} 
                \\
		&= 
		H^{s_0+ds}(\bT^d) 
                \\
        &= 
                B^{s_0+ds}_{2,2}(\bT^d).
	\end{align*}
	Similarly, with $\cY:=L^2(\bT^d)$, 
        it follows that $\cY^t=H^{dt}(\bT^d) = B^{dt}_{2,2}(\bT^d)$ for any $t\ge0$.
	
	Now we fix $s_0:=1$, hence $\cX=H^{1}(\bT^d)$, and let $s>\frac{1}{2}$. 
	For any $r>0$ and $\phi\in C_s^r(\cX)$, 
        there holds by~\eqref{eq:ball-cube-relation} that 
        $\phi\in \cX^{s-\frac{1}{2}-\eps} = H^{1+d(s-\frac{1}{2}-\eps_0)}(\bT^d)$ 
        for any $\eps_0\in (0, s-\frac{1}{2})$. 
        On the other hand, 
        $\phi\in B_r(\cX^s)=B_r(H^{s_0+ds}(\bT^d))$ 
        is sufficient to ensure $\phi\in C_s^r(\cX)$.
	Assumption~\ref{ass:lipschitz} 
        is thus satisfied for any $s>\frac{1}{2}, r>0$ and $t=\frac{1}{d}$, 
        since for $\phi_1,\phi_2\in C_s^r(\cX)$ we have by~\eqref{eq:VI-lipschitz} 
        that
	\begin{equation*}
		\|\cG(\phi_1)-\cG(\phi_2)\|_{\cY^t}\le L_\cG \|\phi_1-\phi_2\|_{\cX}.
	\end{equation*}
	We are now in a position to bound the mean-squared error for 
        obstacles $\phi\in\cX$ of the form $\phi=\gs_r^s(\mfu)$ with $\mfu\in U=[-1,1]^\bN$, 
	given as realizations of the $\cX$-valued random variable 
	\begin{equation*}
		\gs_r^s:U\to \cX, \quad \mfu\mapsto r\sum_{i\in\bN} w_i^s\mfu_i\psi_i.
	\end{equation*}

	Provided that Assumption~\ref{ass:surrogate} holds for fixed exponents 
        $\ga\ge 1$ and $\ol\ga, \gb,\ol\gb\ge 0$, 
        Theorem~\ref{thm:MainRes} shows that for any $\eps\in(0,1]$, 
        there exists a finite-parametric 
        neural network approximation with 
        at most $\cN_{para}(\eps) \in \bN$ parameters 
        to $\cG$ 
        such that it holds
	\begin{equation*}
		\|\cG-\widetilde\cG\|_{L^2(C_r^s(\cX), (\gs_r^s)_\#\cP_U; \cY)} \le \eps,
	\end{equation*}
	and for any $\gd>0$ 
        there exists a constant $C>0$ depending on $\delta$ 
        such that 
 	\begin{equation*}
		\cN_{para}\le
		C
		\begin{cases}
			\eps^{-\ga/(s-1/2)-d(1+\gb/2)-\delta}(1+\abs{\log(\eps)})^\gk, 
			&\quad d\ge 2, \\
			\eps^{-\ga/(s-1/2)-1-\gb-\delta}(1+\abs{\log(\eps)})^\gk, 
			&\quad d=1.
		\end{cases}
	\end{equation*}
	\begin{rem}
		In general $\widetilde\cG(\phi)\notin\cK_\phi$ 
        for a given $\phi\in C_s^r(\cX)$ due to the truncation and 
        neural network approximation of the coordinate mappings $\mfg_j$ in~\eqref{eq:coordinate_g_j}. 
        Further, let $\phi_M:=\sum_{i=1}^M \cE_\cX(\phi)_i\psi_i$ denote the 
        $M$-term approximation of $\phi$ for fixed $M\in\bN$, and 
        assume that $M$ is fixed for all dimensions of the truncated output for simplicity (cf. Section~\ref{sec:input_trunc}). 
        Then, in general also $\widetilde\cG(\phi)\notin\cK_{\phi_M}$, 
        due to the bias from output truncation and the neural network surrogates.
		
	However, a finite dimensional approximation $\widetilde\cG^+(\phi)\in\cK_{\phi_M}$ 
        may be achieved by the following post-processing step.
        Let 
		\begin{equation}
			\widetilde\cG^+(\phi):\cX \to \cY,\quad \phi\mapsto \max\left(\widetilde\cG(\phi), 
                          \sum_{i=1}^M \cE_\cX(\phi)_i\psi_i\right).
		\end{equation}
	The maximum is understood in the point-wise sense. It is well-defined,
    since we assumed at hand a continuous wavelet basis of $L^2(\bT^d)$ for de- and encoding.
		
       Augmenting  $\widetilde\cG(\phi)$ by the $M$-term truncation of $\phi$ involves 
       $M=\cO(\eps^{-1/(s-\frac{1}{2}-\gd)})$ additional parameters (cf.~\eqref{eq:M} 
       in the proof of Theorem~\ref{thm:MainRes}), 
       and therefore does not dominate the asymptotic complexity.
       Furthermore,
		\begin{align*}
			\norm{\widetilde\cG^+(\phi)-\cG(\phi)}_\cY
			&\le \norm{\widetilde\cG^+(\phi)-\max(\widetilde\cG(\phi), \phi)}_\cY 
                           + \norm{\max(\widetilde\cG(\phi), \phi) -\max(\cG(\phi), \phi)}_\cY
                        \\
			&\le \norm{\phi_M-\phi}_\cY + \norm{\widetilde\cG(\phi) -\cG(\phi)}_\cY, 
		\end{align*}
		and since $\cX\hookrightarrow \cY$ there holds 
		\begin{equation*}
			\|\cG(\phi)-\widetilde\cG^+(\phi)\|_{L^2(C_r^s(\cX), (\gs_r^s)_\#\cP_U; \cY)} \le C \eps,
		\end{equation*}
		for a $C>0$, independent of $\eps$.
	\end{rem}
\subsection{Expression Rates for Lipschitz Maps of Hilbert-Schmidt Operators}
\label{sec:LipHS}
Our results also encompass the approximation of nonlinear
Lipschitz maps between spaces of operators. 
We illustrate this for
the particular class of Hilbert-Schmidt (HS) operators,
$\cS_2(\cH_1,\cH_2)$ (``$2$-Schatten'' class of linear operators),
acting between separable Hilbert spaces $\cH_1$ and $\cH_2$.  As
$\cS_2(\cH_1,\cH_2)$ is itself a Hilbert space, the presently
developed abstract framework \eqref{eq:cG} is applicable with
$\cX = \cY = \cS_2(\cH_1,\cH_2)$.  Assuming at hand orthonormal basis
$(\psi^{\cH_i}_j, j\in\bN)$ of $\cH_i$ for $i=1,2$, adopting the
dyadic basis $(\psi^{\cH_1}_j\otimes \psi^{\cH_2}_{j'}, j,j'\in\bN )$
of $\cX = \cS_2(\cH_1,\cH_2)$, the spaces $\cX^s$ correspond to
$p$-Schatten classes for suitable $p(s)<2$.

We first present a general setting 
without particular structural assumptions on the map $\cG$
and then, in Sect.~\ref{sec:FuncCalc}, 
address a particular case of ``singular value maps'', 
through a scalar Lipschitz function, 
of HS operators as considered in \cite{andersson2016operator}.
\subsubsection{Hilbert-Schmidt Operators}
\label{sec:GenLipHS}
We denote by $(\cH_i, (\cdot,\cdot)_{\cH_i})$, $i=1,2$
separable Hilbert spaces and assume that $A:\cH_1\to \cH_2$ is a compact linear operator.
Then there exists a singular value decomposition (SVD) of $A$, i.e., 
there is a sequence $ \{ s_j(A): j \in \bN\} \subset [0,\infty)$, 
and
ONB $(v_i, i\in\bN)$ of $\cH_1$
and another 
ONB $(w_j, j\in\bN)$ of $\cH_2$ 
such that 
\be\label{eq:SVD}
A = \sum_{j\in\bN} s_j(A) w_j \otimes v_j \;,
\ee
where $w_j \otimes v_j\in\cL(\cH_1, \cH_2)$ 
is defined by 
$(w_j \otimes v_j) \phi = \inp{\phi}{v_j}_{\cH_1}w_j$. 
The real, non-negative numbers 
$s_j(A)\ge0$ in \eqref{eq:SVD} are the \emph{singular values of $A$}.
They accumulate only at zero.
For $0<p\leq \infty$, 
we denote the subset of \emph{$p$-Schatten class operators} 
as
$$
S_p(\cH_1,\cH_2) = \{ A \in \cL(\cH_1,\cH_2): (s_j(A), j\in\bN) \in \ell^p(\bN) \}
\;.
$$
Of particular interest in the present context is the case $p=2$, 
the so-called Hilbert-Schmidt (HS) operators, with square-summable singular values 
$(s_j(A), j\in\bN) \in \ell^2(\bN)$.
For $A\in \cS_2(\cH_1,\cH_2)$, 
the sum in~\eqref{eq:SVD} converges in $\cL(\cH_1,\cH_2)$.
We recall that $\cS_2(\cH_1,\cH_2)$ is a separable Hilbert space with basis 
$(w_j\otimes v_i,\, i,j\in\bN)$ and inner product given by
$$
(A,B)_{HS} := \sum_{j\in\bN} (Ae_j,Be_j)_{\cH_2} \;,
$$
where $(e_k, k\in\bN) \subset H_1$ denotes an orthonormal basis~\footnote{The inner product 
and norm are independent of the choice of orthonormal basis.} 
of $\cH_1$, 
and $(\cdot,\cdot)_{\cH_2}$ denotes the $\cH_2$ inner product. 
A corresponding norm in $S_2(\cH_1 , \cH_2)$ is given by 
$$
\| A \|_{\cS_2(\cH_1,\cH_2)} 
:= 
(A,A)_{HS}^{1/2} \;,\quad A \in \cS_2(\cH_1,\cH_2)\;.
$$
\subsubsection{Singular Value Lipschitz Functional Calculus}
\label{sec:FuncCalc}
A particular class of nonlinear maps on the space of HS operators 
can be constructed via \emph{functional calculus}. 
For such maps, 
better DON emulation rates can be shown than in the general case.
Let $f:\bR_{\ge0}\to \bR$
\footnote{It is also possible to define $\cG_f$ in terms of a complex-valued, scalar Lipschitz function $f:\bR_{\ge0}\to\bC$. 
For the ensuing DNN emulation of operator $\cG_f$, 
one then has to rely on \emph{complex-valued neural networks} (CVNNs), 
that involve complex activation functions and linear transforms, 
see e.g.\ \cite{caragea2022quantitative}. 
We further note that the right hand side of~\eqref{eq:FCLipschitz} 
only yields the bound $\sqrt{2}L_f$ for complex $f:\bR_{\ge0}\to\bC$.} 
be continuous with $f(0)=0$ and consider the map 
\be\label{eq:FCoperators}
	\cG(f):\cS_2(\cH_1,\cH_2)\to\cL(\cH_1,\cH_2), \quad
	A \mapsto \sum_{j\in\bN} f(s_j(A))\,w_j \otimes v_j.
\ee
Note that we may have 
$\cG(f)(A)\notin\cS_2(\cH_1,\cH_2)$ for $A\in \cS_2(\cH_1,\cH_2)$, 
without any further assumptions on $f$.
On the other hand, 
if $f$ is Lipschitz with Lipschitz constant $L_f>0$, 
\cite[Theorem 4.2]{andersson2016operator} shows $\cG(f):\cS_2(\cH_1,\cH_2)\to\cS_2(\cH_1,\cH_2)$ 
is Lipschitz with 
\be\label{eq:FCLipschitz}
	L_{\cG(f)}:=\sup_{A,B\in \cS_2(\cH_1,\cH_2), A\neq B}
	\frac{\norm{\cG(f)(A)-\cG(f)(B)}_{\cS_2(\cH_1,\cH_2)}}{\norm{A-B}_{\cS_2(\cH_1,\cH_2)}}
	\le L_f < \infty.
\ee
Now let $\bw = (w_j, j\in\bN)$ be again a given sequence of 
positive weights, so that $\bw \in\ell^{1+\eps}(\bN)$ for all $\eps>0$.
In view of Section~\ref{sec:dim_trunc}, 
we encode elements $A\in\cX=\cS_2(\cH_1,\cH_2)$
using the basis $(w_j\otimes v_i, i,j\in\bN)$ 
and define 
\be
\cX^s:=\set{A\in \cX: \; \norm{A}_{\cX^s}^2 := \sum_{j\in\bN} s_j(A)^2w_j^{-2s}<\infty}, \quad s>0, 
\ee
as well as the cubes
\be
C_r^s(\cX) := \set{A\in \cX: \; \sup_{j\in\bN} s_j(A)^2w_j^{-2s}\le r}, \quad r,s>0.
\ee
In case there are $p\in(0,2)$ and $\gd>0$ such that 
$s_j(A)\le C w_j^{(1+\gd)/p}$ for all $j\in\bN$, 
it follows that $A\in \cS_p(\cH_1,\cH_2)$. 
In addition, 
for all $s\in [0,(1+\gd)(1/p-1/2)]$ there holds $A\in\cX^s$, 
since 
\begin{equation*}
	\norm{A}_{\cX^s}^2= \sum_{j\in\bN} s_j(A)^2w_j^{-2s}
	\le \left(\sup_{j\in\bN} s_j(A)^{2-p}w_j^{-2s}\right)\sum_{j\in\bN} s_j(A)^p
	\le C \left(\sup_{j\in\bN} s_j(A)^{2-p}w_j^{-2s}\right)<\infty.
\end{equation*}
\subsubsection{DNN Emulation Rates}
\label{sec:NNFuncCalc}
We verify that 
operators $\cG(f)$ of the form~\eqref{eq:FCoperators} with scalar, 
nonnegative Lipschitz $f$ are a particular case of our framework. 
Specifically,
the representation~\eqref{eq:FCoperators} implies 
favourable DNN expression rate bounds in terms of 
the number $\cN_{para}$ of neurons.
To verify this, we apply our abstract setting with the choices
$\cX=\cY: = \cS_2(\cH_1,\cH_2)$.
Our construction of a DNN surrogate starts from the 
$N$-term truncated operator $\cG_N(f)$, 
which is given via
\be\label{eq:FCoperators_trunc}
\cG_{N}(f):\cX\to\cY, \quad
A \mapsto \sum_{j=1}^N f(s(A_j))\,w_j \otimes v_j,\quad N\in\bN.
\ee
Note that we have fixed input and output truncation by $N$ terms simultaneously. 
As $f$ is Lipschitz with $f(0)=0$, 
it follows readily that $f(s_j(A))\le L_f s_j(A)$ 
and thus 
\be\label{eq:FC_estimate}
	\norm{\cG(f)(A)-\cG_N(f)(A)}_\cY^2=\sum_{j\ge N} f(s_j(A))^2
	\le L_f^2 \sum_{j\ge N} s_j(A)^2
	.
\ee

Equation~\eqref{eq:FC_estimate} yields 
for any $s>\frac{1}{2}$ and $\gd\in(0,s-\frac{1}{2})$ 
that there is a $C>0$ (depending on $s$ and on $\gd$)
such that
\bee
\norm{\cG(f)(A)-\cG_N(f)(A)}_\cY
\le C w_N^{s-\frac{1}{2}-\gd},
\quad A\in C_r^s(\cX).
\eee

As $\lim_{j\to\infty}s_j(A)=0$, 
we may assume without loss of generality that $s_j(A)\le 1$ for all $j\in\bN$. 
Hence, we only need to replace the univariate Lipschitz mapping $f$ a total of 
$N$ times by a NN surrogate $\widetilde f:[0,1]\to\bR$ 
such that 
$$
\norm{f-\widetilde f\,}_{L^2([0,1])}\le L_f \eps,\quad \eps\in(0,1].
$$
By Assumption~\ref{ass:surrogate},
this may be achieved by \emph{one common scalar surrogate} $\widetilde f$ 
with $\cO(\eps^{-\gb}\abs{\log(\eps)}^{\bar\gb})$ parameters.
Thus, for fixed, scalar Lipschitz $f$ and any $\eps\in(0,1]$, 
there exists a DNN approximation $\widetilde\cG(f)$ with $\cN_{para}\in\bN$ parameters to $\cG(f)$, 
such that 
\begin{equation*}
	\|\cG(f)-\widetilde\cG(f)\|_{L^2(C_r^s(\cX), (\gs_r^s)_\#\cP_U; \cY)} \le \eps,
\end{equation*}
and
for any $\gd>0$ there is $C_\gd> 0$ such that for all $\eps\in(0,1]$ 
holds
\begin{equation*}
	\cN_{para} \le C_\gd \eps^{-1/(s-1/2)-\gb-\delta} (1+\abs{\log(\eps)})^\gk.
\end{equation*}
\section{Conclusions}
\label{sec:Concl}
%
We obtained expression rate bounds for a class of deep operator
networks (DONs) with the architecture \eqref{eq:DONarch} to emulate
Lipschitz continuous maps between separable Hilbert spaces.  It is
based on linear encoder/decoder pairs $(\cE,\cD)$ based on stable
biorthogonal bases of domain $\cX$ and target space $\cY$.  For
example, \KL expansions as employed in so-called PCA-Nets
\cite{lanthaler2023operator}, Fourier bases as used in FNOs
\cite{FNOGeo,UFNo}, etc.  Concrete encoders in this framework either
access PDE inputs via point values as e.g. in ONets
(e.g. \cite{deepONet}) (with dual bases consisting of Dirac measures),
or also via ``Galerkin moments'' as recently promoted in discussion of
transformer-encoders in \cite[Section~4.1.3]{cao2021choose}.  In
either of these cases, \emph{aliasing errors} due to sampling or
quadrature must be accounted for, in addition to the expression error
analysis performed here.  To accommodate possibly low regularity of the
map $\cG$, we considered neural approximators $\tilde{G}$ in
\eqref{eq:DONarch} from two superexpressive classes of DNNs that are
not subject to the CoD: (i) NNs with superexpressive activation
functions and (ii) NNs with nonstandard architecture.  Specifically,
the ``Nest-Net'' construction from \cite{shen2022neural}.

Extensions of the presently developed analysis 
to separable Banach spaces that admit 
stable, biorthogonal representation systems
such as those developed for a broad range of Besov-Triebel-Lizorkin spaces
in domains as constructed e.g.\ 
in \cite{TriebelBases} and the references there are conceivable,
under conditions on these spaces.
We refer to \cite[Section~9, App.~B]{NeurOp} for 
conditions and techniques in such more general settings.
These references considered DONs emulating maps $\cG$ 
between spaces of functions in Euclidean domains.
More general settings in the abstract framework 
of Section~\ref{sec:LipHS} may accommodate DONs that emulate maps from 
certain linear operators between separable Hilbert spaces 
to functions, 
a task that frequently arises in inverse problems,
for example (with suitably regularizing observation functionals).

Also covered are multiresolution encoders and decoders, 
such as wavelets \cite{TriebelWavDom}.
Since multiresolution bases hierarchically encode increasing spatial and temporal 
resolution, the presently developed results and framework comprise also 
so-called ``multi-fidelity'' operator networks as put forward in \cite{MultiFidOnets}.

For Lipschitz continuous $\cG$, in
  \cite{LMK22,lanthaler2023operator} \emph{lower expression rate
    bounds} for DON surrogates 
  were obtained, for approximators $\widetilde{G}$ in
  \eqref{eq:DONarch} being standard feedforward NNs with smooth,
  nonpolynomial activation. 
  Moreover, in the recent work~\cite{lanthaler2023curse}, it was
  demonstrated that emulating Lipschitz operators (without additional
  structural properties) on infinite-dimensional hypercubes using a
  broad class of ONet architectures is subject to the CoD.  See for
  instance the exponential lower bounds on the number of parameters in
  the ONet 
  presented in \cite[Theorem
  2.15]{lanthaler2023curse}. Such statements are not contradictory to
  our results: the analysis in \cite{lanthaler2023curse} specifically
  considers \emph{feedforward ReLU-NN surrogates for the coordinate
    maps and decoders}.
In the present manuscript,
higher
rates were obtained by more general approximators $\widetilde{G}$ in \eqref{eq:DONarch}
with either nonstandard architecture or nonstandard,
``superexpressive'' activations.

The encoder/decoder pairs in \eqref{eq:DONarch} pass between the in-
and output spaces $\cX$ and $\cY$ and the sequence space
$\ell^{2}(\N)$ via linear transformations.  Approximation error bounds
due to finitely truncating coefficient sequences furnished by
\emph{linear encoding} on $\cX^s$ were obtained by $N$-term sequence
truncation.  As is well-known, however, \emph{adaptive, nonlinear
  encoding} could yield the same rates for considerably larger classes
of inputs, from (quasi-)Banach spaces $B^\gamma_{p,p}$ for some
$0<p<2$ \cite{CDDTree2001}.  The extension of the presently proposed
framework to such encoders will be considered elsewhere.

The expression rate bounds for DONs \eqref{eq:DONarch} obtained here are
a consequence of \emph{regularity and sparsity of inputs and outputs}, 
as expressed by weighted sequence summability, 
and the assumed mapping properties of $\cG:\cX^s \to \cY^t$, 
and super expressivity of activations in the DNN emulations 
$\widetilde{\mathfrak{g}}_j$ 
of the components 
$\mathfrak{g}_j$ of $G$ in \eqref{eq:DONarch}.
They cover a large class of operator network constructions.
Architectures which are essentially different
such as U-Nets and transformer-based emulators 
to build $\widetilde{G}$ in \eqref{eq:DONarch}, 
as proposed e.g. in \cite{li2022transformer,cao2021choose} and 
in the references there, or the 
branch-trunk architecture of deepONets \cite{deepONet,MS21_984}
could be investigated similarly. 

We remark that the linear decoding used in the ONet architecture
is necessarily restrictive. 
Generally, images of subspaces of $N$-term truncated 
encoded inputs under $\cG$ become 
\emph{Lipschitz manifolds} embedded into $\cY$. 
\emph{Linear decoding} as assumed in Section~\ref{sec:output_trunc} 
of the present analysis will in general poorly capture such 
manifolds, without any further assumptions. 
Here, we \emph{assumed} $\cG(\cX^s)$ to be contained in $\cY^t$.
This assumption is often satisfied in data-to-solution maps for 
elliptic and parabolic PDEs, when $\cX^s$ and $\cY^t$ coincide with
suitable function spaces of Sobolev, resp. of Besov-Triebel-Lizorkin type.
In such spaces, the (linear) $M$-term decoding in Prop.~\ref{prop:input_trunc}
provides corresponding approximation rates. 
For the more general case of the range of $\cG$ on $N$-term encoded inputs
in $\cX$, \emph{nonlinear decoding} as in the branch-trunk architecture of \cite{ChenChen1993},
which allows for data-dependent representation system in the output-decoder,
could result in better expressivity, closer to the \emph{Lipschitz-width benchmark}
\cite{petrova2023lipschitz}.

     \appendix
\section{Proof of Universality}
\label{app:ProofUni}

  \begin{lem}\label{lem:S}
    Let $E \subseteq \ell^2(\N)$ be compact. 
    Then
    \begin{equation*}
      S:= E \cup\{(c_1,\dots,c_n,0,0,\dots) : \bc\in E,~n\in\N\}
    \end{equation*}
    is a compact subset of $\ell^2(\N)$.
  \end{lem}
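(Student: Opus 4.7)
The plan is to prove compactness by establishing sequential compactness, exploiting the fact that the truncation operators $\cR_n:\ell^2(\N)\to\ell^2(\N)$ introduced in Section~\ref{sec:output_trunc} are norm non-increasing and satisfy $\cR_n\bd\to\bd$ as $n\to\infty$ for every fixed $\bd\in\ell^2(\N)$.

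Given an arbitrary sequence $(\bc^{(k)})_{k\in\N}\subset S$, I would first partition it according to the two alternatives in the definition of $S$: either $\bc^{(k)}\in E$, or $\bc^{(k)}=\cR_{n_k}\bd^{(k)}$ for some $\bd^{(k)}\in E$ and some $n_k\in\N$. By passing once to a subsequence we may assume that all terms belong to the same alternative. The first case is trivial: compactness of $E$ yields a subsequence converging to a limit in $E\subseteq S$. So the main work is the second case.

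In the second case, apply compactness of $E$ to the auxiliary sequence $(\bd^{(k)})_k$ to extract a subsequence (not relabelled) with $\bd^{(k)}\to\bd$ in $\ell^2(\N)$ for some $\bd\in E$. Now split according to the behavior of $(n_k)$. If $(n_k)$ admits a bounded subsequence, then by pigeonhole some fixed $n\in\N$ occurs infinitely often; along that sub-subsequence, continuity of $\cR_n$ gives $\cR_n\bd^{(k)}\to \cR_n\bd$, and $\cR_n\bd\in S$ by definition of $S$. Otherwise $n_k\to\infty$ along a subsequence, and the triangle inequality together with $\|\cR_{n_k}\|_{\ell^2\to\ell^2}\le 1$ yields
\begin{equation*}
\|\cR_{n_k}\bd^{(k)}-\bd\|_{\ell^2(\N)}
\;\le\; \|\cR_{n_k}(\bd^{(k)}-\bd)\|_{\ell^2(\N)}+\|\cR_{n_k}\bd-\bd\|_{\ell^2(\N)}
\;\le\; \|\bd^{(k)}-\bd\|_{\ell^2(\N)}+\|\cR_{n_k}\bd-\bd\|_{\ell^2(\N)},
\end{equation*}
and both terms tend to $0$ as $k\to\infty$. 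Hence $\cR_{n_k}\bd^{(k)}\to\bd\in E\subseteq S$.

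The only mildly delicate point is the case $n_k\to\infty$: it is where one must verify that the limit is still in $S$, which works precisely because the pointwise limit of the truncations lies in $E$ itself, not merely in the closure of the truncated copies. Everything else is a routine extraction of subsequences, so no further case analysis is needed.
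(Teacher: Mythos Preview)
Your proof is correct and takes a genuinely different route from the paper's. The paper argues directly via the open-cover definition: for each $\bc\in E$ it finds a ball $B_{\eps_\bc}(\bc)\subseteq O_{i_\bc}$ small enough that all sufficiently long truncations of nearby points stay inside, extracts a finite subcover of $E$ by such balls (thereby handling $E$ together with all truncations beyond a uniform threshold $N$), and then covers the finitely many remaining ``short'' truncation images $\bigcup_{n\le N}\cR_n(E)$ using continuity of each fixed $\cR_n$. Your sequential-compactness argument is more streamlined: the dichotomy on $(n_k)$ (some value repeated infinitely often vs.\ $n_k\to\infty$), combined with the contraction property $\|\cR_n\|_{\ell^2\to\ell^2}\le 1$ and the pointwise convergence $\cR_n\bd\to\bd$, does all the work in a few lines. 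The paper's route makes the uniform tail decay over $E$ explicit via an $\eps/3$ estimate, which is precisely the phenomenon you exploit implicitly when you pass to $n_k\to\infty$ along a subsequence with $\bd^{(k)}\to\bd$; your version simply avoids that bookkeeping.
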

  \begin{proof}
    Let $(O_i)_{i\in I}$ be an open cover of $S$. 
    We need to show there exists a finite subcover.

    For each $\bc=(c_j)_{j\in\N}\in E$ there exists $\eps_\bc>0$ and $i_\bc\in I$
    such that
    \begin{equation}\label{eq:ic}
      B_{\eps_\bc}(\bc) := \set{\bd\in\ell^2(\N):\norm{\bc-\bd}_{\ell^2}<\eps_\bc}\subseteq O_{i_\bc}.
    \end{equation}
    Additionally, there exists $n_\bc\in\N$ such that
    $\sum_{j>n_\bc} c_j^2< \left({\eps_\bc}/{3}\right)^2$. 
    Then for any
    $\bd\in B_{\eps_\bc/3}(\bc)$ and any $n > n_\bc$ there holds
    \begin{align}\label{eq:norm3}
      \norm{\bc - (d_1,\dots,d_n,0,0,\dots)}_{\ell^2}&\le
      \norm{\bc-\bd}_{\ell^2}+\Bigg(\sum_{j>n_\bc}(d_j^2-c_j^2)+\sum_{j>n_\bc} c_j^2\Bigg)^{1/2}\nonumber\\
      &\le \frac{\eps_\bc}{3} + \Big(\frac{\eps_\bc^2}{3^2} + \frac{\eps_\bc^2}{3^2}\Big)^{1/2}<\eps_\bc.
    \end{align}

    Since $E$ is compact and
    $E\subseteq \bigcup_{\bc\in E}B_{\eps_\bc/3}(\bc)$, 
    there exists
    $m\in\N$ and $\bc_1,\dots,\bc_m\in E$ such that
    $E\subseteq \bigcup_{i=1}^mB_{\eps_{\bc_i}/3}(\bc_i)$.  
    Let $\bd\in E$ be arbitrary. 
    Then there exists $j\in\{1,\dots,m\}$ such
    that $\bd\in B_{\eps_{\bc_j}/3}(\bc_j)$. 
    Thus by \eqref{eq:norm3}
    and \eqref{eq:ic} it holds for all
    $n>N:=\max_{i=1,\dots,m}n_{\bc_i}$
    \begin{equation*}
      (d_1,\dots,d_n,0,0,\dots)\in B_{\eps_{\bc_j}}(\bc_j)\subseteq O_{i_{\bc_j}}.
    \end{equation*}
    This shows
    \begin{equation*}
      E\cup
      \{(d_1,\dots,d_n,0,0,\dots) : \bd\in E,~n>N\}\subseteq \bigcup_{j=1}^m O_{i_{\bc_j}}.
    \end{equation*}

    Finally observe that
    $\bd\mapsto (d_1,\dots,d_n,0,0,\dots):\ell^2(\N)\to\ell^2(\N)$ is
    continuous for any fixed $n\in\N$. Hence compactness of $E$ gives
    compactness of the set of remaining elements
    \begin{equation*}
      R:=\bigcup_{n=1}^N \{(d_1,\dots,d_n,0,0,\dots) : \bd\in E\}.
    \end{equation*}
    Thus $R$ can be covered by finitely many $O_i$,
    and therefore the same holds for $S$.
  \end{proof}

\begin{proof}[Proof of Theorem~\ref{thm:Universality}]
  It suffices to construct $\widetilde\cG_n$, $n\in\N$, such that for
  any compact $K\subseteq\cX$ holds
  $\lim_{n\to\infty}\widetilde\cG_n|_K=\cG|_K$ with uniform
  convergence. 
  Throughout the rest of this proof fix
    $K\subseteq\cX$ compact. We proceed in four steps to construct
    $\widetilde\cG_n$ (independent of $K$) as claimed.
  
  {\bf Step 1.} 
  We claim that for
  every $\delta>0$ exists ${N_1}(\delta,K)\in\N$
  such that
  \begin{equation}\label{eq:claim_m1}
    \sup_{x\in K}\norm{x-\sum_{j=1}^{m}\inp{x}{\widetilde\psi_j}_\cX\psi_j}_\cX
     <
     \delta\qquad\qquad\forall m\ge N_1.
  \end{equation}
To prove this, for all $m\in\N$, define the open sets
\begin{equation*}
    \cX_m:={\rm span}\{\psi_1,\dots,\psi_m\}+B_{r}^{\cX},
\end{equation*}
  where $B_{r}^{\cX}$ denotes the open ball
  of radius $r:=\frac{\delta^2}{\Lambda_\cX^2}>0$ around $0\in\cX$. 
  These sets are nested and
  $\bigcup_{m\in\N}\cX_m = \cX\supset K$. 
  Since $K$ is compact
  there exists ${N_1}$ such that $K\subseteq \cX_{{N_1}}$. 
  Then for any $x\in K$
  \begin{equation*}
    x=\sum_{j=1}^{{N_1}}\alpha_j\psi_j+\widetilde x
  \end{equation*}
  for some $\alpha_1,\dots,\alpha_{{N_1}}\in\R$ and some
  $\widetilde x\in\cX$ with
  $\norm{\widetilde x}_\cX^2<\frac{\delta^2}{\Lambda_\cX^2}$ and thus
  $\sum_{j\in\N}\inp{\widetilde
    x}{\widetilde\psi_j}_\cX^2<\frac{\delta^2}{\Lambda_\cX}$. Then for
  all $m\ge {N_1}$
  \begin{equation*}
    \norm{x-\sum_{j=1}^{m}\inp{x}{\widetilde\psi_j}_\cX\psi_j}_\cX
    =\norm{\sum_{j>m}\inp{\widetilde x}{\widetilde\psi_j}_\cX\psi_j}_\cX
    <
    \Lambda_\cX \sum_{j > N_1}\inp{\widetilde x}{\widetilde\psi_j}_\cX^2\le\delta^2,
  \end{equation*}
  which shows \eqref{eq:claim_m1}.

  {\bf Step 2.} 
  We claim that for every
  $\eps>0$ exists ${N_2}(\eps,K)\in\N$ such that
  \begin{equation}\label{eq:claim_m2}
    \sup_{x\in K}\norm{\cG(x)-\cG\left(\sum_{j=1}^m\inp{x}{\widetilde\psi_j}_\cX\psi_j\right)}_\cY
    \le
    \eps\qquad\qquad\forall m\ge N_2.
  \end{equation}

  Compactness of $K$ implies that $\cG:K\to\cY$ is uniformly continuous.  
  Hence for any $\eps>0$ exists $\delta>0$ such that
  $\norm{x-\widetilde x}_\cX<\delta$ implies
  $\norm{\cG(x)-\cG(\widetilde x)}_\cY < \eps$. 
  Set
  ${N_2}(\eps,K) := {N_1}(\delta,K)$. 
  Then \eqref{eq:claim_m1} gives \eqref{eq:claim_m2}.

  {\bf Step 3.} 
  We claim that for every $\eps>0$ 
    there exists $M_3(\eps,K)\in\N$ such that\footnote{The notation 
    $N$ and $M$ is chosen so that $N$ always refers to trunction of
    input, $M$ always refers to truncation of output. 
    The subindex of $N$ and $M$ refers to step of proof.}
  \begin{equation}\label{eq:claim_m4}
    \sup_{x\in K}\sup_{n\in\N} 
    \sum_{j> M_3}\inpc{\cG\left(\sum_{i=1}^n\inp{x}{\widetilde\psi_i}_\cX\psi_i\right)}{\widetilde\eta_j}_\cY^2
    < \eps^2 
     \;.
  \end{equation}

As $x\mapsto \cE_\cX(x):\cX\to\ell^2(\N)$ is a bounded and
 linear (thus continuous) map and $K\subseteq\cX$ is compact,
 also $\cE_\cX(K)$ is compact. 
 According to Lemma \ref{lem:S} the set
  \begin{equation*}
    S:=\cE_\cX(K)\cup \{(c_1,\dots,c_n,0,0,\dots) : \bc\in\cE_\cX(K),~n\in\N\}\subseteq \ell^2(\N)\\
  \end{equation*}
  is compact.
Continuity of
$\cG\circ\cD_\cX:\ell^2(\N)\to\cY:\bc\mapsto
\cG(\sum_{j\in\N}c_j\psi_j)$ gives that
  \begin{equation}
    \cG(\cD_\cX(S)) 
    = 
    \bigg\{\cG(x):x\in K\bigg\}\cup \bigg\{\cG\bigg(\sum_{j=1}^n\inp{x}{\widetilde\psi_j}_\cX\psi_j\bigg):x\in K,~n\in\N\bigg\}
    \subseteq \cY
  \end{equation}
    is compact, and
  \eqref{eq:claim_m4} then follows by the statement in Step 1.
  
 {\bf Step 4.} We construct $\widetilde\cG_n$ and conclude the proof.

  For every $n\in\N$
  and $j\in\{1,\dots,n\}$
  let $\widetilde{\mathfrak{g}}_j^n:[-n,n]^n\to\R$ be a $\sigma$-NN
  such that
  \begin{equation}\label{eq:frgjn}
\sup_{\bc\in [-n,n]^n}
\bigg|\underbrace{\inpc{\cG\Big(\sum_{i=1}^nc_i\psi_i\Big)}{\widetilde\eta_j}_\cY}_{=:{\mathfrak{g}_j^n}(\bc)}-\widetilde{\mathfrak{g}}_j^n(\bc)\bigg|<2^{-n}.
  \end{equation}
  Such $\widetilde{\mathfrak{g}}_j^n$
  exists according to \cite[Theorem 1]{LESHNO1993861}, 
  since $\bc\mapsto g_j^n(\bc):\R^n\to\R$
  is continuous due to the continuity of $\cG$. 
  Now define (independent of $K$)
  \begin{equation*}
    \widetilde\cG_n(x)
    :=
    \sum_{j=1}^n\widetilde{\mathfrak{g}}_j^n(\inp{x}{\widetilde\psi_1}_\cX,\dots,\inp{x}{\widetilde\psi_n}_\cX)\eta_j.
  \end{equation*}

  Fix 
  $\eps>0$ and let 
  $n \ge \max\{N_2(\eps,K),M_3(\eps,K)\}$. 
  Then for any $x\in K$ by \eqref{eq:frgjn} and \eqref{eq:claim_m4}
  \begin{align*}
    &\norm{\cG\left(\sum_{j=1}^n\inp{x}{\widetilde\psi_j}_\cX\psi_j\right) - \widetilde \cG_n(x)}_\cY^2\\
    &=\norm{\sum_{j=1}^n (\mathfrak{g}_j^n
      ((\inp{x}{\widetilde\psi_j}_\cX)_{j=1}^n)- \widetilde{\mathfrak{g}}_j^n ((\inp{x}{\widetilde\psi_j}_\cX)_{j=1}^n))\eta_j
      + \sum_{j>n} \mathfrak{g}_j^n((\inp{x}{\widetilde\psi_j}_\cX)_{j=1}^n) \eta_j
      }_\cY^2\nonumber\\
    &\le 2\Lambda_\cY \left(\sum_{j=1}^n 2^{-2n}+\sum_{j>n}\mathfrak{g}_j^n((\inp{x}{\widetilde\psi_j}_\cX)_{j=1}^n)^2\right)
        \nonumber\\
    &\le 2\Lambda_\cY (n 2^{-2n}+\eps^2).
  \end{align*}
  Therefore using \eqref{eq:claim_m2}
  \begin{equation*}
    \lim_{n\to\infty}\sup_{x\in K}\norm{\cG(x)-\widetilde \cG_n(x)}_\cY
    \le
    \lim_{n\to\infty}
    \big(\eps+(2\Lambda_\cY (n2^{-2n}+\eps^2))^{1/2}\big)
    = (1+\sqrt{2\Lambda_\cY}) \eps.
  \end{equation*}
  Since $\eps>0$ was arbitrary, this concludes the proof.
\end{proof}
{\small 
\addcontentsline{toc}{section}{References}
\bibliographystyle{abbrv}
\bibliography{references}
}
\end{document}